\theoremstyle{plain}
\newtheorem{theo}{Theorem}[section]
\newtheorem{lemm}[theo]{Lemma}
\newtheorem{coro}[theo]{Corollary}
\newtheorem{prop}[theo]{Proposition}
\theoremstyle{definition}
\newtheorem{defi}[theo]{Definition}
\theoremstyle{remark}
\newtheorem{rema}[theo]{Remark}
\newtheorem{exam}[theo]{Example}
\numberwithin{equation}{theo}
\newcommand{\N}{\mathbb{N}}
\newcommand{\Z}{\mathbb{Z}}
\newcommand{\R}{\mathbb{R}}
\newcommand{\id}{\mathrm{id}}
\newcommand{\cL}{\mathcal{L}}
\DeclareMathOperator{\codim}{codim}
\DeclarePairedDelimiter\paren{(}{)}
\DeclarePairedDelimiter\abs{\lvert}{\rvert}
\DeclarePairedDelimiter\norm{\lVert}{\rVert}
\renewcommand{\bar}{\overline}
\renewcommand{\tilde}{\widetilde}
\renewcommand{\mid}{:}
\renewcommand{\vee}{*}
\begin{document}
\title{Extendability of parallel sections in vector bundles}
\author{Tim Kirschner}
\address{Fakultät für Mathematik\\ Universität Duisburg-Essen}
\email{tim.kirschner@uni-due.de}
\urladdr{http://timkirschner.tumblr.com}
\date\today
\subjclass[2010]{Primary 53C05; Secondary 53B05, 14J60, 53C29}
\begin{abstract}
I address the following question: Given a differentiable manifold $M$, what are the open subsets $U$ of $M$ such that, for all vector bundles $E$ over $M$ and all linear connections $\nabla$ on $E$, any $\nabla$-parallel section in $E$ defined on $U$ extends to a $\nabla$-parallel section in $E$ defined on $M$?

For simply connected manifolds $M$ (among others) I describe the entirety of all such sets $U$ which are, in addition, the complement of a $C^1$ submanifold, boundary allowed, of $M$. This delivers a partial positive answer to a problem posed by Antonio J. Di Scala and Gianni Manno \cite{discala}. Furthermore, in case $M$ is an open submanifold of $\mathbb R^n$, $n\ge 2$, I prove that the complement of $U$ in $M$, not required to be a submanifold now, can have arbitrarily large $n$-dimensional Lebesgue measure.
\end{abstract}
\maketitle
\tableofcontents

\section{Introduction}
\label{s_intro}

In their recent preprint Antonio J. Di Scala and Gianni Manno raise the following question \cite[Problem 1]{discala}: given a vector bundle $E$ over a simply connected manifold $M$, a connection $\nabla$ on $E$, and a $\nabla$-parallel section $\sigma$ in $E$ defined on an open, dense, connected subset $U \subset M$, does there exist a $\nabla$-parallel section $\tilde\sigma$ defined on $M$ such that $\tilde\sigma$ extends $\sigma$---that is, such that $\tilde\sigma|_U = \sigma$? Di Scala and Manno explain, among others, how to apply this question to the extension of Killing vector fields. Please consult their introduction for details as well as further applications.

For the note at hand, I would like to widen the scope of Di Scala's and Manno's question slightly suggesting an alternative problem: for a given manifold $M$ (simply connected or not), describe/characterize the set of all open subsets $U \subset M$ such that, for all vector bundles $E$ over $M$, all connections $\nabla$ on $E$, and all $\nabla$-parallel sections $\sigma$ in $E$ defined on $U$, there exists a $\nabla$-parallel extension $\tilde\sigma$ as above. As a matter of fact, I will try and characterize the universe of closed subsets $F \subset M$ whose complement $U = M \setminus F$ has the aforementioned property. These closed subsets $F \subset M$ will be called \emph{negligible} in $M$ (see \cref{d_negligible}).

My results come in two groups. For one thing, in \cref{s_basics}, I derive \emph{necessary conditions} for a set $F$ to be negligible in $M$. Specifically, I prove that when $F$ is negligible in $M$, and $M$ is connected, then the complement $M \setminus F$ is necessarily connected too (\cref{p_connected}). When $M$ is of dimension $2$ or higher, then, moreover, $F$ needs to be nowhere dense in $M$ (\cref{c_nowheredense}). Observe that Di Scala and Manno have already pointed these two conditions out as relevant---without, however, proving their necessity \cite{discala}.

For another, in \cref{s_hyper}, I derive \emph{sufficient conditions} for a set $F$ to be negligible in $M$. This is probably the more interesting part (as compared to \cref{s_basics}) since here I prove that parallel extensions of parallel sections do in fact exist. The most striking result of \cref{s_hyper} is \cref{c_submfds} which asserts in particular (compare \cref{r_dissected}) that when $M$ is a simply connected (second-countable, Hausdorff) manifold and $F \subset M$ is a closed $C^1$ submanifold with boundary such that $M \setminus F$ is dense and connected in $M$, then $F$ is negligible in $M$. Hence, \cref{c_submfds} yields a partial positive answer for the question of Di Scala and Manno.

As a sideline in \cref{s_hyper}, I show that the Lebesgue measure of a set $F$ is quite unrelated to the negligibility of $F$. Precisely, I prove the existence of negligible subsets of $\R^n$ of arbitrarily large, and even infinite, measure (\cref{c_bigmeasure}). Besides, and constrasting, I show that the fact that $F$ has Lebesgue measure $0$ inside $\R^n$, $n\ge 2$, does not imply that $F$ is negligible in $\R^n$ for all connections of class $C^0$, even when $\R^n\setminus F$ is connected (\cref{c_counterex-c0}). The latter observation tells us that the smoothness of the connection is essential in Di Scala's and Manno's question.

\Cref{s_manifolds,s_connections} contain preliminary definitions, conventions, and remarks that I employ in the course of \cref{s_basics,s_hyper}.
\medskip

\emph{Acknowledgements}: I would like to thank Antonio J. Di Scala for introducing me to the circle of problems at hand in the first place---not least by giving a very beautiful talk on the subject in Bayreuth last year. I would like to thank Gerhard Rein for letting me present my research (albeit slightly off-topic) in his Oberseminar. Finally, my very special thanks goes to Thomas Kriecherbauer for a) suggesting to look at “fat” Cantor sets in connection with \cref{l_hyper} and b) encouraging me to write down \cref{t_submfds} in arbitrary dimension (where I was treating merely the case $\dim M = 2$ at the time).

\section{Manifolds and submanifolds with boundary}
\label{s_manifolds}

By a \emph{manifold} I mean a locally finite-dimensional (i.e., locally modeled on some $\R^n$, $n \in \N$) differentiable manifold of class $C^k$, $1 \leq k \leq \infty$, without boundary; I make no topological assumptions whatsoever \cite[cf.][23]{lang}. The extended (i.e., allowing $\infty$) natural number $k$ will be fixed throughout. The sheaf of real-valued functions of class $C^m$ on $M$, $0 \leq m \leq k$, will be denoted by $C^m_M$, or by $C^m$ when $M$ is clear from the context.

Let us recall, mainly for the sake of \cref{t_submfds}, some terminology concerning manifolds and submanifolds with boundary.\footnote{The manifolds with boundary that we consider will always arise as submanifolds.}

\begin{defi}
\label{d_submfd}
Let $M$ be a manifold, $F \subset M$ a subset, $0 \leq m \leq k$ a natural number or $\infty$.

Let $p \in F$. Then $F$ is a \emph{$C^m$ submanifold with boundary} of $M$ at $p$ when there exist $d,c \in \N$, an open neighborhood $U$ of $p$ in $M$, an open subset $V$ of $\R^d \times \R^c$, and an isomorphism $\phi \colon U \to V$ of class $C^m$ such that
\[
\phi(F \cap U) = (H_d \times \{(0,\dots,0)\}) \cap V,
\]
where
\begin{equation}
\label{e_halfspace}
H_d := \begin{cases}\{(x_1,\dots,x_d) \in \R^d \mid 0 \leq x_d\} & \text{when } d>0, \\ \R^0 & \text{when } d=0. \end{cases}
\end{equation}
In that case we write
\[
\codim_p(F,M) = c.
\]

We say that $F$ is a \emph{$C^m$ submanifold with boundary} of $M$ if $F$ is a $C^m$ submanifold with boundary of $M$ at $p$ for all $p \in F$. If this is the case, we set
\[
\codim(F,M) := \inf \{\codim_p(F,M) \mid p \in F\},
\]
where the infimum of the empty set is taken to be $\infty$.
\end{defi}

In \cref{s_hyper} we are interested in nowhere dense, closed subsets $F$ of manifolds $M$. In case $F$ is a submanifold with boundary of $M$, nowhere density can be characterized in terms of the codimension of $F$ inside $M$.

\begin{rema}[Density and codimension]
\label{r_nowdense-codim}
Let $M$ be a manifold, $F$ a closed $C^0$ submanifold with boundary of $M$. Then the following are equivalent:
\begin{enumerate}
\item \label{i_nowdense} $F$ is nowhere dense in $M$.
\item \label{i_codim} $1 \leq \codim(F,M)$.
\end{enumerate}
Assume \cref{i_nowdense}. Let $p \in F$. Assume $\codim_p(F,M) = 0$. Then there exists an open neighborhood $U$ of $p$ in $M$, a number $n \in \N$, an open subset $V \subset \R^n$, and a homeomorphism $\phi \colon U \to V$ such that $\phi(F \cap U) = H_n \cap V$. If $n=0$, then $U = \{p\} \subset F$. Thus $p$ is an interior point of $F$ in $M$, contradicting \cref{i_nowdense}. If $n\ge1$, then there exists a number $\epsilon>0$ such that
\[
y := \phi(p) + (0,\dots,0,\epsilon) \in \{x \in \R^n \mid 0 < x_n\} \cap V.
\]
Thus the preimage of the point $y$ is an interior point of $F$ in $M$---contradiction. So, $1 \leq \codim_p(F,M)$. As $p \in F$ was arbitrary, we have \cref{i_codim}.

Assume \cref{i_codim}, and let $p \in F$. Then there exists an open neighborhood $U$ of $p$ in $M$, numbers $d,c \in \N$, an open subset $V$ of $\R^d \times \R^c$, and a homeomorphism $\phi \colon U \to V$ such that $\phi(F \cap U) \subset (\R^d \times \{(0,\dots,0)\}) \cap V$. If $p$ were an interior point of $F$ in $M$, the point $\phi(p)$ would be an interior point of $\R^d \times \{(0,\dots,0)\}$ in $\R^d \times \R^c$. Yet as $1 \leq \codim(F,M) \leq \codim_p(F,M) = c$, this is absurd.
\end{rema}

A manifold (or submanifold) with boundary has interior points and boundary points. The precise definition of these notions is given below. A word of caution: When $X$ is a topological space and $A$ a subset of $X$, we have the usual, topological notion of an interior point of $A$ in $X$---for instance, as used in \cref{r_nowdense-codim}. Furthermore, we have the usual notion of a boundary point of $A$ in $X$. These notions have to be distinguished carefully from the following, even though the names are the same.

\begin{defi}
\label{d_intpoint}
Let $F$ be an arbitrary topological space, $p \in F$.
We say $p$ is an \emph{interior point} (in the manifold sense) of $F$ when there exist $d \in \N$, an open neighborhood $U'$ of $p$ in $F$, and a homeomorphism $\phi' \colon U' \to \R^d$. The set of interior points of $F$ is called the \emph{interior} of $F$.

We say $p$ is a \emph{boundary point} (in the manifold sense) of $F$ when there exist $d \in \N$, an open neighborhood $U'$ of $p$ in $F$, and a homeomorphism $\phi' \colon U' \to H_d$, where $H_d$ is given by \cref{e_halfspace}, such that $\phi'(p)$ lies in the topological boundary of $H_d$ in $\R^d$. The set of boundary points of $F$ will be denoted by $\partial F$.
\end{defi}

\section{Linear connections in vector bundles}
\label{s_connections}

By a \emph{vector bundle} over a manifold I mean a locally finite-rank real vector bundle of class $C^l$, where $0 \leq l \leq k$. When $E$ is a vector bundle over $M$ and $0 \leq m \leq l$, I write $C^m(E)$ for the sheaf of sections of class $C^m$ in $E$; I write $C^m(U,E)$ as a synonym for $C^m(E)(U)$ when $U$ is open in $M$. Observe that an element $s \in C^m(U,E)$ is thus particularly a function $s \colon U \to E$.

For the purposes of this note, connections on vector bundles are understood in the sense of covariant derivatives. Thus, specifically, connections are always linear. Observe that even when working with smooth manifolds (i.e., when $l = k = \infty$), I do not require my connections to be smooth also, but merely continuous.

\begin{defi}
\label{d_connection}
A \emph{connection} on a vector bundle $E$ over $M$, where $1 \leq l$, is a morphism of abelian sheaves on $M$,
\[
\nabla \colon C^1(E) \to C^0(T^\vee(M) \otimes E),
\]
satisfying
\begin{equation}
\label{e_leibniz}
\nabla_U(fs) = df \otimes s + f\nabla_U(s)
\end{equation}
for all open subsets $U$ of $M$, all $f \in C^1(U)$, and all $s \in C^1(U,E)$.
If $\nabla$ is a connection on $E$, we put
\[
\Gamma^\nabla(E) := \ker(\nabla)
\]
so that $\Gamma^\nabla(E)$ becomes, in particular, an abelian subsheaf of $C^1(E)$. The sheaf $\Gamma^\nabla(E)$ is called the \emph{sheaf of $\nabla$-parallel sections} in $E$. We write $\Gamma^\nabla(U,E)$ as a synonym for $\Gamma^\nabla(E)(U)$.

We say that $\nabla$ is \emph{of class $C^m$}, $0 \leq m \leq \infty$, when $m+1 \leq l$ and $\nabla$ maps $C^l(E)$ into $C^m(T^\vee(M) \otimes E)$. Here, $\infty + 1 := \infty$. Note that a connection (without further specification) is hence the same thing as a connection of class $C^0$.

A \emph{vector bundle with connection (of class $C^m$)} over $M$ is a pair $(E,\nabla)$ such that $E$ is a vector bundle over $M$ and $\nabla$ is a connection (of class $C^m$) on $E$.
\end{defi}

\begin{rema}[Smooth connections]
\label{r_connection-class}
Typically connections are dealt with only when $E$ and (consequently) $M$ are of class $C^\infty$. A connection is then a morphism of abelian sheaves
\[
\nabla \colon C^\infty(E) \to C^\infty(T^\vee(M) \otimes E)
\]
satisfying Leibniz's rule---that is, \cref{e_leibniz}---for all $f \in C^\infty(U)$ and all $s \in C^\infty(U,E)$. Let me briefly reconcile this view with mine.

Evidently, when $\nabla$ is a connection on $E$ over $M$ in my sense, $\nabla$ induces, by restriction, a morphism of abelian sheaves
\[
\nabla' \colon C^l(E) \to C^0(T^\vee(M) \otimes E)
\]
satisfying \cref{e_leibniz} for all $f \in C^l(U)$, all $s \in C^l(U,E)$, and $U \subset M$ open.

Conversely, given such a morphism $\nabla'$, there exists a unique connection $\nabla$ on $E$ such that $\nabla$ restricted to $C^l(E) \subset C^1(E)$ equals $\nabla'$. As a matter of fact, on an open subset $U$ of $M$ over which $E$ is trivial (in the $C^l$ sense) the values of $\nabla_U$ are determined by ${\nabla'}_U$ and the Leibniz rule \cref{e_leibniz} for $f \in C^1(U)$ and $s \in C^l(U,E)$.

Moreover, under this correspondence, $\nabla$ is of class $C^m$, $0 \leq m \leq \infty$, if and only if $\nabla'$ has image lying in the subsheaf $C^m(T^\vee(M) \otimes E)$ of $C^0(T^\vee(M) \otimes E)$.
In particular, for $E$ and $M$ of class $C^\infty$, the usual $C^\infty$ connections on $E$ correspond naturally to my connections on $E$ which are of class $C^\infty$.
\end{rema}

\begin{rema}[The global section component]
\label{r_connection-global}
Let $0 \leq m \leq \infty$, $m+1 \leq l$, and
\[
\nabla_0 \colon C^l(M,E) \to C^m(M,T^\vee(M) \otimes E)
\]
be an additive map satisfying \cref{e_leibniz} for $\nabla_0$ in place of $\nabla_U$, for all $f \in C^l(M)$ and $s \in C^l(M,E)$.
Then, in case $M$ is Hausdorff, there exists a unique morphism of abelian sheaves
\[
\nabla \colon C^l(E) \to C^m(T^\vee(M) \otimes E)
\]
such that $\nabla_M = \nabla_0$ and \cref{e_leibniz} is satisfied for all $U \subset M$ open, all $f \in C^l(U)$, and all $s \in C^l(U,E)$.

The main reason is the following. When $M$ is Hausdorff, then for all $U \subset M$ open and all $p \in U$ there exists a function $f \in C^k(M)$ such that a), $f = 1$ on an open neighborhood $V$ of $p$ in $U$ and b), the support of $f$ is a (compact) subset of $U$. This fact implies that, for all $s \in C^l(U,E)$, there exists $\tilde s \in C^l(M,E)$ such that $\tilde s = s$ on $V$. Thus the calculation
\[
\nabla_U(s)|_V = \nabla_V(s|_V) = \nabla_V({\tilde s}|_V) = \nabla_M(\tilde s)|_V = \nabla_0(\tilde s)|_V
\]
shows the uniqueness of $\nabla$. The existence of $\nabla$ follows noting that when $\tilde s,\tilde s_1 \in C^l(M,E)$ agree on an open subset $V \cap V_1$ of $M$, then $\nabla_0(\tilde s)$ and $\nabla_0(\tilde s_1)$ agree on $V \cap V_1$, too. In fact, for $p \in V \cap V_1$, there exists a function $g \in C^k(M)$ such that a), $g = 1$ on a neighborhood $W$ of $p$ and b), the support of $g$ is contained in $V \cap V_1$.
In particular, $gt = 0$ on $M$, where $t = \tilde s_1 - \tilde s$, and thus, on $W$,
\[
0 = \nabla_0(gt) = dg \otimes t + g\nabla_0(t) = 0 \otimes t + 1\nabla_0(t) = \nabla_0(t) = \nabla_0(\tilde s_1) - \nabla_0(\tilde s)
\]
due to the additivity of $\nabla_0$ and \cref{e_leibniz} for $\nabla_0$ in place of $\nabla_U$.
\end{rema}

\begin{rema}[Connection forms]
\label{r_frame}
Let $M$ be a manifold, $E$ a vector bundle over $M$, $r \in \N$, and $(e_1,\dots,e_r)$ a frame (i.e., a global frame of class $C^l$, $1 \leq l$) for $E$. Then I contend that for all $r\times r$ matrices $(\omega^\alpha_\beta)$ with values in $C^0(M,T^\vee(M))$ there exists one, and only one, connection $\nabla$ on $E$ such that
\[
\nabla_M(e_\beta) = \omega^\alpha_\beta \otimes e_\alpha, \quad \forall \beta.
\]

For the uniqueness of $\nabla$ let $U$ be an open subset of $M$ and $\sigma \in C^1(U,E)$, and observe that there exists an $r$-tuple $(\sigma^\beta)$ of elements of $C^1(U)$ such that $\sigma = \sigma^\beta (e_\beta|_U)$. Therefore,
\begin{align*}
\nabla_U(\sigma) & = \nabla_U(\sigma^\beta (e_\beta|_U)) = d\sigma^\beta \otimes (e_\beta|_U) + \sigma^\beta \nabla_U(e_\beta|_U) \\
& = d\sigma^\beta \otimes (e_\beta|_U) + \sigma^\beta (\nabla_M(e_\beta)|_U) \\
& = d\sigma^\beta \otimes (e_\beta|_U) + \sigma^\beta (\omega^\alpha_\beta \otimes e_\alpha)|_U. 
\end{align*}

For the existence of $\nabla$ simply define $\nabla_U$, for $U \subset M$ open, by the latter identity---note that the tuple $(\sigma^\beta)$ is unique. Then the family $\nabla = (\nabla_U)$ is a morphism of sheaves on $M$,
\[
\nabla \colon C^1(E) \to C^0(T^\vee(M) \otimes E),
\]
mainly since, for $V \subset U$ open, $\sigma|_V = \sigma^\beta|_V (e_\beta|_V)$ and $(d\sigma^\beta)|_V = d(\sigma^\beta|_V)$. Furthermore, the morphism of sheaves $\nabla$ is additive; it satisfies the Leibniz rule as, for $f \in C^1(U)$, we have
\[
d(f\sigma^\beta) = \sigma^\beta df + f d\sigma^\beta, \quad \forall \beta.
\]
I omit the details.
\end{rema}

\begin{defi}
\label{d_trivialbundle}
When $M$ is a manifold and $r \in \N$, the projection $M \times \R^r \to M$ becomes a vector bundle $E$ of class $C^k$ over $M$ the obvious way. $E$ is called the \emph{trivial bundle} of rank $r$ over $M$. Let $(e_1,\dots,e_r)$ be the standard frame for $E$; that is,
\[
e_\alpha \colon M \to E, \quad e_\alpha(x) = (x,(0,\dots,0,1,0,\dots,0)),
\]
where the “$1$” is placed in the $\alpha$'s component.
Then, for all subsets $U$ of $M$, any (set-theoretic) section $s$ in $E$ defined on $U$ can be expressed uniquely in the form $s = s^\beta e_\beta$, where the $s^\beta$, $\beta = 1, \dots, r$, are functions $s^\beta \colon U \to \R$. For $0 \leq m \leq k$, when $U \subset M$ is open, the section $s$ is of class $C^m$ if and only if $s^\beta$ is of class $C^m$ for all $\beta$.

The \emph{standard connection} on $E$ is defined by the formula
\[
\nabla_U(s) = ds^\beta \otimes (e_\beta|_U),
\]
where $U \subset M$ is open and $s \in C^1(U,E)$. Observe that $\nabla = (\nabla_U)$ is indeed a connection on $E$. As a matter of fact, you obtain $\nabla$ taking $\omega^\alpha_\beta = 0$ for all $\alpha,\beta$ in \cref{r_frame}. Moreover, $\Gamma^\nabla(E)$ is precisely the sheaf of locally constant sections in $E$, a section $s$ in $E$ being called locally constant if its composition with the projection $E \to \R^r$ is a locally constant function.
\end{defi}

To conclude this \namecref{s_connections} we review two important constructions that can be performed with vector bundles and connections---namely, the restriction to an open subspace as well as the more general pullback by a differentiable map.

\begin{rema}[Restriction]
\label{r_restriction}
Let $M$ be a manifold, $E$ a vector bundle over $M$, and $U$ an open subset on $M$. Then we have a natural notion of a restriction of $E$ to $U$, denoted by $E|_U$, which is a vector bundle of class $C^l$ over the open submanifold $M|_U$ of $M$ of class $C^k$. Notice that most often the induced manifold $M|_U$ is sloppily denoted by just $U$.

Assume $1 \leq l$, and let $\nabla$ be a connection on $E$. Then we may sheaf-theoretically restrict $\nabla$ to $U$ to obtain a morphism of abelian sheaves
\[
\nabla|_U \colon C^1(E)|_U \to C^0(T^\vee(M) \otimes E)|_U
\]
on $U$ (or better, on $M|_U$). Since
\begin{align*}
C^1(E)|_U & = C^1(E|_U), \\
C^0(T^\vee(M) \otimes E)|_U & = C^0((T^\vee(M) \otimes E)|_U) = C^0(T^\vee(M)|_U \otimes E|_U) \\
& \cong C^0(T^\vee(M|_U) \otimes E|_U),
\end{align*}
and the Leibniz rule transfers from $\nabla$ down to $\nabla|_U$, we see that $\nabla|_U$ becomes a connection on $E|_U$ (modulo the indicated identification of $T^\vee(M)|_U$ and $T^\vee(M|_U)$ which, in turn, amounts to identifying $T_p(M|_U)$ and $T_p(M)$ for all $p \in U$). This is called the \emph{restriction} of $\nabla$ to $U$. We write $(E,\nabla)|_U$ for the pair $(E|_U,\nabla|_U)$.
\end{rema}

\begin{rema}[Pullback]
\label{r_pullback}
Let $M$ and $M'$ be manifolds of classes $C^k$ and $C^{k'}$ respectively, $1 \leq k,k' \leq \infty$. Let $\phi \colon M' \to M$ be a morphism of class $C^1$ and $E$ a vector bundle of class $C^l$ over $M$, where $1 \leq l \leq k$. Then by a \emph{pullback} of $E$ by $\phi$ I mean a pullback of $E$ by $\phi$ in the sense of $C^1$ manifolds; that is, you first pass from $M$, $M'$, and $E$ to their corresponding manifolds of class $C^1$ (by possibly enlarging the atlases), then you speak of the pullback. Concretely, a pullback of $E$ by $\phi$ is a pair $(E',\phi')$, where $E'$ is a vector bundle of class $C^1$ over $M'$ and $\phi' \colon E' \to E$ is a vector bundle homomorphism covering $\phi$ such that the pair $(E',\phi')$ is universal (or better said, terminal) among all such pairs. We obtain a commutative diagram:
\[
\xymatrix{
E' \ar[r]^{\phi'} \ar[d] & E \ar[d] \\ M' \ar[r]^{\phi} & M
}
\]

Let $(E',\phi')$ be a pullback of $E$ by $\phi$. Moreover, let $\nabla$ be a connection on $E$. I contend there exists a unique connection $\nabla'$ on $E'$ such that the following condition holds: when $U$ is open in $M$, $r \in \N$, $(e_1,\dots,e_r)$ is a frame for $E$ over $U$, and $(\omega^\alpha_\beta)$ an $r\times r$ matrix with values in $C^0(U,T^\vee(M))$ such that
\[
\nabla_U(e_\beta) = \omega^\alpha_\beta \otimes e_\alpha, \quad \forall \beta,
\]
then
\[
\nabla'_{U'}(\phi^*e_\beta) = \phi^*\omega^\alpha_\beta \otimes \phi^*e_\alpha, \quad \forall \beta,
\]
where $U' := \phi^{-1}(U)$.
Note that $\phi^*$ has two meanings: for one, $\phi^*\omega^\alpha_\beta$ denotes the pullback of $\omega^\alpha_\beta$ in the sense of $1$-forms; for another, $\phi^*e_\beta$ denotes the \emph{pullback section} of $e_\beta$ by $\phi$ with respect to the pullback bundle $(E',\phi')$ of $E$; that is, $\phi^*e_\beta$ is the unique section of class $C^1$ in $E'$ defined on $U'$ such that $\phi' \circ \phi^*e_\beta = e_\beta \circ \phi$.

As a matter of fact, for all $U$ and $e = (e_\beta)$ as above, the $r$-tuple $(\phi^*e_\beta)$ constitutes a frame for $E'$ over $U'$---that is, a global frame for $E'|_{U'}$. Therefore, by \cref{r_frame}, there exists a unique connection $\nabla'_{U,e}$ on $E'|_{U'}$ such that
\[
(\nabla'_{U,e})_{U'}(\phi^*e_\beta) = \phi^*\omega^\alpha_\beta \otimes \phi^*e_\alpha, \quad \forall \beta.
\]
We may view $\nabla'_{U,e}$ as a morphism of abelian sheaves
\[
\nabla'_{U,e} \colon C^1(E')|_{U'} \to C^0(T^\vee(M') \otimes E')|_{U'}
\]
on $M'|_{U'}$---note that this is up to the isomorphism $T^\vee(M')|_{U'} \to T^\vee(M'|_{U'})$ which is induced by the inclusion map $U' \to M'$. For another pair $(V,f)$ consisting of an open set $V$ in $M$ and a local frame $f$ for $E$ over $V$ a calculation with transition functions shows that the morphisms of sheaves $\nabla'_{U,e}$ and $\nabla'_{V,f}$ agree on $U' \cap V'$, $V' = \phi^{-1}(V)$. Thus (since the sheaf hom on $M'$ of the abelian sheaves $C^1(E')$ and $C^0(T^\vee(M') \otimes E')$ is again a sheaf on $M'$, and since the $U'$ cover $M'$) there exists a unique morphism of abelian sheaves on $M'$,
\[
\nabla' \colon C^1(E') \to C^0(T^\vee(M') \otimes E'),
\]
such that $\nabla'|_{U'} = \nabla'_{U,e}$ for all pairs $(U,e)$. The Leibniz rule readily extends from the individual $\nabla'_{U,e}$ to $\nabla'$. This proves my claim.

We call $\nabla'$ the \emph{pullback connection} of $\nabla$ associated to the pullback $(E',\phi')$ of $E$ by $\phi$. The pullback connection $\nabla'$ has the following decisive property (slightly generalizing the property used to characterize $\nabla'$ above): for all open subsets $W$ of $M$ and all $\sigma \in C^1(E)(W)$ we have
\[
\nabla'_{\phi^{-1}(W)}(\phi^*\sigma) = \phi^*(\nabla_W(\sigma)),
\]
where the $\phi^*$ on the right-hand side takes a section $\xi$ in $T^\vee(M) \otimes E$ defined on $W$ to the unique section $\phi^*\xi$ in $T^\vee(M') \otimes E'$ defined on $\phi^{-1}(W)$ such that, for all $p' \in \phi^{-1}(W)$, the value $(\phi^*\xi)(p')$ is the image of $\xi(\phi(p'))$ under the evident tensor product map
\[
T_{\phi(p')}^\vee(M) \otimes E_{\phi(p')} \to T_{p'}^\vee(M') \otimes E'_{p'}.
\]
Writing $\xi = \xi^\alpha \otimes e_\alpha$ with respect to a local frame $e = (e_\alpha)$ for $E$, the $\xi^\alpha$ being local sections in $T^\vee(M)$, we have $\phi^*\xi = \phi^*\xi^\alpha \otimes \phi^*e_\alpha$. As a consequence, we see that the pullback of sections
\[
\phi^* \colon C^1(E) \to \phi_*(C^1(E')),
\]
viewed as a morphism of sheaves on $M$, maps the subsheaf $\Gamma^\nabla(E) \subset C^1(E)$ into the subsheaf $\Gamma^{\nabla'}(E') \subset C^1(E')$.
\end{rema}

\section{Basic theory of negligible sets}
\label{s_basics}

I begin by giving the central definition of the text. Further below, I illuminate this definition with several examples and first properties. The idea is to unveil conditions that are necessary for a set $F$ to be negligible in $M$.

\begin{defi}
\label{d_negligible}
Let $M$ be a manifold, $F$ a closed subset of $M$.
\begin{enumerate}
\item Let $(E,\nabla)$ be a vector bundle with connection over $M$. Then $F$ is called \emph{negligible in $M$ for $(E,\nabla)$} when the restriction map
\begin{equation}
\label{e_rest}
\Gamma^\nabla(E)(M) \to \Gamma^\nabla(E)(M \setminus F)
\end{equation}
of the sheaf $\Gamma^\nabla(E)$ is surjective.
\item Let $0 \leq m \leq \infty$. Then $F$ is called \emph{negligible in $M$ for all connections of class $C^m$} when, for all vector bundles $E$ over $M$ and all connections $\nabla$ of class $C^m$ on $E$, the set $F$ is negligible in $M$ for $(E,\nabla)$.
\end{enumerate}
\end{defi}

\begin{rema}[Generalizations]
\label{r_generalizations}
Note that whether a closed set $F \subset M$ is negligible for $(E,\nabla)$ depends exclusively on the presheaf $\Gamma^\nabla(E)$. Following that philosophy, given an arbitrary topological space $M$ and a presheaf (say, of sets) $\mathcal P$ on $M$, one might call a closed subset $F$ of $M$ \emph{negligible in $M$ for $\mathcal P$} when the restriction map $\mathcal P(M) \to \mathcal P(M \setminus F)$ is surjective.

If you prefer to work with open sets instead of closed ones, the latter notion generalizes further. Let $\mathcal C$ be a small category, $T$ a terminal object of $\mathcal C$, and $\mathcal P$ a presheaf of sets on $\mathcal C$. Then call an object $U$ of $\mathcal C$ \emph{full in $\mathcal C$ for $\mathcal P$} (feel free to substitute this expression by an expression that is more to your taste) when the restriction map $\mathcal P(T) \to \mathcal P(U)$ is surjective.
\end{rema}

\begin{exam}[Empty set]
\label{x_emptyset}
Quite trivially, for all manifolds $M$, the empty set is negligible in $M$ for all connections of class $C^0$.
\end{exam}

\begin{exam}[Whole space]
\label{x_wholespace}
For all manifolds $M$, the set $M$ itself is negligible in $M$ for all connections of class $C^0$. Indeed, let $(E,\nabla)$ be a vector bundle with connection over $M$. Then, on the one hand,
\[
\Gamma^\nabla(M \setminus M,E) = \Gamma^\nabla(\emptyset,E) = \{\emptyset\}.
\]
On the other hand, $\Gamma^\nabla(M,E)$ contains the zero section $z$, and $z|_\emptyset = \emptyset$.
\end{exam}

\begin{lemm}
\label{l_sectionsagree}
Let $M$ be a manifold, $(E,\nabla)$ a vector bundle with connection over $M$, $U$ an open, connected subset of $M$, $s,t \in \Gamma^\nabla(U,E)$, $p \in U$ such that $s(p) = t(p)$. Then $s = t$.
\end{lemm}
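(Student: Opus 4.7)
The plan is to argue by an open--closed (connectedness) argument applied to the coincidence set
\[
A := \{q \in U : s(q) = t(q)\}.
\]
By hypothesis $p \in A$, so $A \neq \emptyset$. Since $s$ and $t$ are continuous (being of class $C^1$ as elements of $\Gamma^\nabla(U,E) \subset C^1(U,E)$), the set $A$ is closed in $U$. As $U$ is connected, it then suffices to show that $A$ is open in $U$; this will force $A = U$ and hence $s = t$.

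To establish openness at a point $q \in A$, I would choose an open neighborhood $V$ of $q$ in $U$ that is diffeomorphic to an open ball in some $\R^n$ (hence star-shaped about $q$) and over which $E$ admits a local frame $(e_1,\dots,e_r)$ of class $C^l$. Restricting $(E,\nabla)$ to $V$ as in \cref{r_restriction} and using \cref{r_frame}, one obtains an $r\times r$ matrix $(\omega^\alpha_\beta)$ of continuous $1$-forms on $V$ with $\nabla_V(e_\beta|_V) = \omega^\alpha_\beta \otimes (e_\alpha|_V)$. Setting $u := s|_V - t|_V$, which is an element of $\Gamma^\nabla(V,E|_V)$ with $u(q) = 0$, and writing $u = u^\beta (e_\beta|_V)$ for unique $u^\beta \in C^1(V)$, the identity $\nabla_V(u) = 0$ translates into the first-order system
\[
du^\alpha + \omega^\alpha_\beta u^\beta = 0, \qquad \alpha = 1,\dots,r.
\]

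Next, for any $q' \in V$, I would transport this along the straight-line path $\gamma \colon [0,1] \to V$ joining $q$ to $q'$ (available by star-shapedness). Setting $y^\alpha(\tau) := u^\alpha(\gamma(\tau))$ yields the linear ODE system
\[
y'(\tau) = -A(\tau)\, y(\tau), \qquad y(0) = 0,
\]
with continuous coefficient matrix $A(\tau)^\alpha_\beta := \omega^\alpha_\beta(\gamma(\tau))(\gamma'(\tau))$. Uniqueness of solutions then forces $y \equiv 0$, so $u(q') = 0$ and $q' \in A$. As $q'$ was arbitrary in $V$, we conclude $V \subset A$, proving openness at $q$.

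The main obstacle is the weak regularity of $\nabla$: since $\nabla$ is required only to be of class $C^0$, the connection forms $\omega^\alpha_\beta$ are merely continuous and the classical Picard--Lindel\"of theorem does not apply directly. I would circumvent this by invoking the uniqueness statement for \emph{linear} ODE systems with continuous coefficients, which follows from Gr\"onwall's inequality without any Lipschitz hypothesis: if $y$ solves $y' = -A(\tau) y$ with $y(0) = 0$ on a compact interval, then $\lvert y(\tau)\rvert \le \lvert y(0)\rvert \exp\!\bigl(\int_0^\tau \lVert A\rVert\bigr) = 0$. With this ingredient, the connectedness argument closes and gives $s = t$.
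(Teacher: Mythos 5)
Your proof is correct and rests on the same key idea as the paper's: restricting the parallel-section equation to a path reduces it to a linear ODE system with continuous coefficients, whose uniqueness of solutions with given initial value yields the claim. The paper instead joins $p$ directly to an arbitrary $q \in U$ by a piecewise $C^1$ path and trivializes $E$ along it, whereas you package the same idea as an open--closed argument on the coincidence set; note also that your Gr\"onwall detour is not actually needed, since Picard--Lindel\"of only requires Lipschitz continuity in the state variable, which a linear system $y' = -A(\tau)y$ with continuous coefficient matrix automatically satisfies on compact intervals.
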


\begin{proof}[Proof sketch]
Let $q \in U$. Then, due to the connectedness of $U$, there exists a (piecewise) $C^1$ path $\gamma \colon [0,1] \to M$ with $\gamma(0) = p$, $\gamma(1) = q$, and $\gamma([0,1]) \subset U$. Trivializing $E$ locally along $\gamma$, one sees that $s \circ \gamma$ and $t \circ \gamma$ correspond to solutions of an $r$-dimensional system of linear ordinary differential equations defined on $[0,1]$, where $r \in \N$ is the local rank of $E$ along $\gamma$ (compare \cref{r_pullback}). By the uniqueness in the Picard-Lindelöf theorem we deduce the equality $s \circ \gamma = t \circ \gamma$ from
\[
(s \circ \gamma)(0) = s(p) = t(p) = (t \circ \gamma)(0).
\]
Specifically, $s(q) = t(q)$. As $q \in U$ was arbitrary, we are done.
\end{proof}

\begin{prop}
\label{p_local}
Let $M$ be a manifold, $F$ a closed subset of $M$, $(E,\nabla)$ a vector bundle with connection over $M$, $\mathfrak U$ an open cover of $M$ such that
\begin{enumerate}
\item \label{i_localneg} for all $U \in \mathfrak U$, the set $F \cap U$ is negligible in $U$ for $(E,\nabla)|_U$, and
\item \label{i_intersection} for all connected components $W$ of $U \cap V$, where $U,V \in \mathfrak U$, $U \neq V$, there exists an element $p \in W \setminus F$.
\end{enumerate}
Then $F$ is negligible in $M$ for $(E,\nabla)$.
\end{prop}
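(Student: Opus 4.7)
The plan is the classical \enquote{extend locally, then glue} strategy, tailored to the sheaf $\Gamma^\nabla(E)$. Let $\sigma \in \Gamma^\nabla(M\setminus F,E)$ be given; I want to produce $\tilde\sigma \in \Gamma^\nabla(M,E)$ with $\tilde\sigma|_{M\setminus F} = \sigma$. For each $U\in\mathfrak U$, restricting $\sigma$ yields an element of $\Gamma^{\nabla|_U}(E|_U)(U\setminus(F\cap U))$, and hypothesis \cref{i_localneg} then supplies a section $\tilde\sigma_U \in \Gamma^{\nabla|_U}(E|_U)(U)$ whose restriction to $U\setminus(F\cap U)$ equals $\sigma|_{U\setminus(F\cap U)}$. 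Under the identifications of \cref{r_restriction}, the $\tilde\sigma_U$ lie in $\Gamma^\nabla(E)(U)$.

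The heart of the argument is to verify that $\tilde\sigma_U|_{U\cap V} = \tilde\sigma_V|_{U\cap V}$ for all $U,V\in\mathfrak U$; the case $U = V$ is trivial, so assume $U \neq V$. Fix a connected component $W$ of $U\cap V$. By \cref{i_intersection} there is a point $p \in W$ with $p \notin F$, and at such a point
\[
\tilde\sigma_U(p) = \sigma(p) = \tilde\sigma_V(p),
\]
since both $\tilde\sigma_U$ and $\tilde\sigma_V$ extend $\sigma$ off $F$. Now $W$ is open and connected in $M$, and $\tilde\sigma_U|_W,\tilde\sigma_V|_W \in \Gamma^\nabla(W,E)$; hence \cref{l_sectionsagree} (applied to $(E,\nabla)|_W$) forces $\tilde\sigma_U|_W = \tilde\sigma_V|_W$. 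As $W$ ranges over the connected components of $U\cap V$, these equalities combine to $\tilde\sigma_U|_{U\cap V} = \tilde\sigma_V|_{U\cap V}$.

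Since $\Gamma^\nabla(E)$ is a sheaf and $\mathfrak U$ covers $M$, the compatible family $(\tilde\sigma_U)_{U \in \mathfrak U}$ glues to a unique $\tilde\sigma \in \Gamma^\nabla(M,E)$ with $\tilde\sigma|_U = \tilde\sigma_U$ for every $U \in \mathfrak U$. For any $q \in M\setminus F$, choosing $U \in \mathfrak U$ with $q \in U$ gives $\tilde\sigma(q) = \tilde\sigma_U(q) = \sigma(q)$, so $\tilde\sigma|_{M\setminus F} = \sigma$. This proves that the restriction map \cref{e_rest} is surjective, i.e., $F$ is negligible in $M$ for $(E,\nabla)$.

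The only non-routine step is the component-by-component comparison on $U\cap V$: it is precisely here that hypothesis \cref{i_intersection} is used, and the appeal to \cref{l_sectionsagree} is essential because a parallel section is rigidly determined by a single value only on a \emph{connected} open set. No other part of the argument poses genuine difficulty; all remaining ingredients are formal sheaf-gluing together with the identifications recorded in \cref{r_restriction}.
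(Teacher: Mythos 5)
Your proposal is correct and follows essentially the same route as the paper: extend locally via hypothesis \cref{i_localneg}, compare the local extensions on each connected component of $U\cap V$ at a point supplied by \cref{i_intersection} using \cref{l_sectionsagree}, and glue by the sheaf property of $\Gamma^\nabla(E)$. The only detail the paper adds beyond your argument is a remark on how to choose the family $(\tilde\sigma_U)$ canonically so as to avoid the axiom of choice, which is inessential to the mathematics.
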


\begin{proof}
Let $s \in \Gamma^\nabla(M \setminus F,E)$. Then, for all $U \in \mathfrak U$, due to \cref{i_localneg}, there exists an element $\tilde s_U \in \Gamma^\nabla(U,E)$ such that ${\tilde s}_U|_{U \setminus F} = s|_{U \setminus F}$. Therefore there exists a family $(\tilde s_U)_{U \in \mathfrak U}$ such that, for all $U \in \mathfrak U$, $\tilde s_U$ has the aforementioned property. Note that the existence of $(\tilde s_U)$ follows at once invoking the axiom of choice. However, the axiom of choice can be circumvented here. Indeed, for all $U \in \mathfrak U$, there exists a unique extension $\tilde s_U \in \Gamma^\nabla(U,E)$ of $s|_{U \setminus F}$ such that $\tilde s_U = 0$ on all connected components of $U$ that are contained in $F$ (use \cref{l_sectionsagree} for those components of $U$ that are not contained in $F$).

Let $U,V \in \mathfrak U$, $U \neq V$. Let $W$ be a connected component of $U \cap V$. Then by \cref{i_intersection} there exists an element $p \in W \setminus F$. Since both $\tilde s_U$ and $\tilde s_V$ agree with $s$ on $(U \cap V) \setminus F$, we have $\tilde s_U(p) = \tilde s_V(p)$. So $\tilde s_U$ and $\tilde s_V$ agree on $W$ by \cref{l_sectionsagree} as $W$ is connected and open in $M$. As $W$ was arbitrary, we see that $\tilde s_U$ and $\tilde s_V$ agree on $U \cap V$. Thus as $\Gamma^\nabla(E)$ is a sheaf, there exists one, and only one, $\tilde s \in \Gamma^\nabla(E)(M)$ such that $\tilde s|_U = \tilde s_U$ for all $U \in \mathfrak U$. In consequence, for all $U \in \mathfrak U$,
\[
(\tilde s|_{M \setminus F})|_{U \setminus F} = (\tilde s|_U)_{U \setminus F} = \tilde s_U|_{U \setminus F} = s|_{U \setminus F},
\]
which implies that $\tilde s|_{M \setminus F} = s$ as the $U \setminus F$, for $U \in \mathfrak U$, furnish an open cover of $M \setminus F$.
\end{proof}

\begin{prop}
\label{p_components}
Let $M$ be a manifold, $F$ a closed subset of $M$, $(E,\nabla)$ a vector bundle with connection over $M$. Then $F$ is negligible in $M$ for $(E,\nabla)$ if and only if, for all connected components $U$ of $M$, the set $F \cap U$ is negligible in $U$ for $(E,\nabla)|_U$.
\end{prop}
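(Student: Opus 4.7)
\medskip
\noindent\textbf{Proof proposal.} The plan is to exploit that a manifold $M$, being locally Euclidean, is locally connected, so the family $\mathfrak U$ of connected components of $M$ is a pairwise disjoint \emph{open} cover of $M$. With this cover in hand, both directions of the asserted equivalence reduce to essentially formal manipulations and an invocation of already-established machinery.

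For the ``if'' direction I would simply apply \cref{p_local} to the cover $\mathfrak U$. Its hypothesis \cref{i_localneg} is precisely the standing assumption, and its hypothesis \cref{i_intersection} is vacuous because $U \cap V = \emptyset$ whenever $U,V \in \mathfrak U$ are distinct, so there are no connected components $W$ of $U \cap V$ to verify anything for. \Cref{p_local} then delivers the negligibility of $F$ in $M$ for $(E,\nabla)$ immediately.

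For the converse I would fix a connected component $U_0$ of $M$ and a section $s \in \Gamma^{\nabla|_{U_0}}(U_0 \setminus F, E|_{U_0})$, which via the identifications made in \cref{r_restriction} is the same datum as an element of $\Gamma^\nabla(U_0 \setminus F, E)$. I would then assemble a global parallel section $\tilde s \in \Gamma^\nabla(M \setminus F, E)$ by the recipe $\tilde s|_{U_0 \setminus F} = s$ and $\tilde s|_{V \setminus F} = 0$ on every other component $V \in \mathfrak U$ of $M$. This gluing is legitimate because the pieces $V \setminus F$, $V \in \mathfrak U$, are pairwise disjoint, open, and together exhaust $M \setminus F$; on each piece the prescribed value is patently $\nabla$-parallel. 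Invoking the hypothesized negligibility of $F$ in $M$ for $(E,\nabla)$ then yields some $\hat s \in \Gamma^\nabla(M,E)$ with $\hat s|_{M \setminus F} = \tilde s$, and the restriction $\hat s|_{U_0}$ is the desired parallel extension of $s$ to $U_0$.

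I do not foresee any substantive obstacle; the only point that merits a line of care is the tacit identification, via \cref{r_restriction}, of $\nabla|_U$-parallel sections in $E|_U$ with $\nabla$-parallel sections in $E$ defined on open subsets of $U$, so that the hypothesis and conclusion really can be read on the ``same'' sheaf.
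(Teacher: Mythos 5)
Your proposal is correct and follows essentially the same route as the paper: the ``if'' direction via \cref{p_local} applied to the (pairwise disjoint, open) cover by connected components, with \cref{i_intersection} holding vacuously, and the ``only if'' direction by extending the given local parallel section by zero on the remaining components before invoking global negligibility and restricting back. No gaps.
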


\begin{proof}
The “if” part follows from \cref{p_local} taking $\mathfrak U$ to be the set of connected components of $M$. Note that \cref{i_intersection} of \cref{p_local} holds since any two connected components $U \neq V$ of $M$ have empty intersection, whence $U \cap V$ has itself no connected component at all.

The “only if” part is obtained as follows. Assume that $F$ is negligible in $M$ for $(E,\nabla)$. Let $U$ be a connected component of $M$ and $s \in \Gamma^\nabla(U \setminus F,E)$. Define $t \colon M \setminus F \to E$ to be $s$ on $U \setminus F$ and the zero section outside of $U$. Then $t \in \Gamma^\nabla(M \setminus F,E)$, specifically since $U$ as well as $M \setminus U$ are open in $M$. Thus there exists $\tilde t \in \Gamma^\nabla(M,E)$ such that ${\tilde t}|_{M \setminus F} = t$. In consequence, we have $\tilde s := \tilde t|_U \in \Gamma^\nabla(U,E)$ and
\[
\tilde s|_{U \setminus F} = \tilde t|_{U \setminus F} = t|_{U \setminus F} = s,
\]
which was to be demonstrated.
\end{proof}

\begin{prop}
\label{p_connected}
Let $M$ be a connected manifold, $F$ a closed subset of $M$, $r \in \N$, $1 \leq r$, such that $F$ is negligible in $M$ for $(E,\nabla)$, where $\nabla$ is the standard connection on the trivial bundle $E$ of rank $r$ over $M$. Then $M \setminus F$ is connected.
\end{prop}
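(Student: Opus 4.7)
The plan is to argue by contradiction. Suppose $M\setminus F$ is disconnected. Since $M\setminus F$ is open in $M$, we can write $M\setminus F = U_1 \sqcup U_2$ with $U_1,U_2$ nonempty, open, and disjoint in $M$. I will construct a $\nabla$-parallel section on $M\setminus F$ that distinguishes $U_1$ from $U_2$, extend it to $M$ by negligibility, and then derive a contradiction from the connectedness of $M$.

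The key observation is that, for the standard connection on the trivial bundle of rank $r$, \cref{d_trivialbundle} identifies $\Gamma^\nabla(V,E)$ with the set of locally constant $\R^r$-valued functions on $V$ (via the standard frame). Since $r\ge 1$, we may pick any nonzero vector $c\in\R^r$ and define $s\colon M\setminus F\to E$ by setting $s=0$ on $U_1$ and $s=c$ (viewed via the standard frame) on $U_2$. As $U_1$ and $U_2$ are both open in $M\setminus F$, the section $s$ is locally constant, hence $s\in\Gamma^\nabla(M\setminus F,E)$.

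By the hypothesis that $F$ is negligible in $M$ for $(E,\nabla)$, the restriction map in \cref{e_rest} is surjective, so there exists $\tilde s\in\Gamma^\nabla(M,E)$ with $\tilde s|_{M\setminus F}=s$. Again by \cref{d_trivialbundle}, $\tilde s$ is locally constant on the connected space $M$, and therefore constant. But $U_1$ and $U_2$ are both nonempty, so $\tilde s$ would have to take both the value $0$ (on $U_1$) and the value $c\neq 0$ (on $U_2$)—a contradiction. Hence $M\setminus F$ must be connected.

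No step here is genuinely difficult; the only point requiring any care is to ensure that the piecewise-defined $s$ really is a section of $\Gamma^\nabla$ on the (possibly disconnected) domain $M\setminus F$, which is immediate from the fact that $U_1$ and $U_2$ are open and that $\Gamma^\nabla$ for the standard connection coincides with the sheaf of locally constant sections.
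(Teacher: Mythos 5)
Your proof is correct and rests on exactly the same observation as the paper's, namely that for the standard connection $\Gamma^\nabla(E)$ is the sheaf of locally constant sections, so a global parallel section on connected $M$ is constant while a parallel section on a disconnected $M\setminus F$ can take distinct values on distinct components. The paper phrases this as a dimension count ($\Gamma^\nabla(E)(M)\cong\R^r$ versus $\Gamma^\nabla(E)(M\setminus F)\cong(\R^r)^C$), whereas you exhibit the explicit non-extendable witness; the two arguments are interchangeable.
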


\begin{proof}
Since $\Gamma^\nabla(E)$ is the sheaf of locally constant sections in $E$, we see that, for all $U \subset M$ open, $\Gamma^\nabla(E)(U)$ is isomorphic to $(\R^r)^C$ as a real vector space, $C$ denoting the set of connected components of $U$. In particular, $\Gamma^\nabla(E)(M)$ is isomorphic to $(\R^r)^0$ or $\R^r$ (depending on whether or not $M$ contains an element). Thus, considering dimensions, the surjectivity of the restriction map
\[
\Gamma^\nabla(E)(M) \to \Gamma^\nabla(E)(M \setminus F)
\]
implies that the set of connected components of $M \setminus F$ has cardinality $\le 1$. In turn, $M \setminus F$ is connected.
\end{proof}

\begin{exam}[Intervals]
\label{x_interval}
Let $M \subset \R$ be an open interval endowed with its canonical manifold structure (of class $C^k$, $1 \leq k \leq \infty$). Then for a closed subset $F$ of $M$ the following are equivalent:
\begin{enumerate}
\item \label{i_intall} $F$ is negligible in $M$ for all connections of class $C^0$.
\item \label{i_intstd} There exists a number $r \in \N$, $r \geq 1$, such that $F$ is negligible in $M$ for $(E,\nabla)$, where $\nabla$ is the standard connection on the trivial bundle $E$ of rank $r$ over $M$.
\item \label{i_intconcrete} $M \setminus F$ is connected; that is, $F = F \setminus (M \setminus F)$ is the complement in $M$ of an open subinterval of $M$.
\end{enumerate}

\Cref{i_intall} implies \cref{i_intstd} since you can take $r=1$. \Cref{i_intstd} implies \cref{i_intconcrete} by means of \cref{p_connected}. Now, assume \cref{i_intconcrete}. When $F = M$, then $F$ is negligible in $M$ by \cref{x_wholespace}. So, suppose $F \neq M$, so that there exists an element $t_0 \in M \setminus F$. Let $(E,\nabla)$ be a vector bundle with connection over $M$ and $s \in \Gamma^\nabla(M \setminus F,E)$. We know that the bundle $E$ is trivial over $M$ (see \cref{f_vb}). In particular, there exists $r \in \N$ and a global frame $e = (e_1,\dots,e_r)$ for $E$.
Write $s = s^\beta e_\beta$ with functions $s^\beta \colon M \setminus F \to \R$. Then
\[
0 = \nabla_{M \setminus F}(s) = (ds^\alpha + s^\beta\omega^\alpha_\beta) \otimes e_\alpha
\]
for some
\[
\omega^\alpha_\beta = A^\alpha_\beta dx^1 \in C^0(M,T^\vee(M)),
\]
$x^1 \colon M \to \R$ being the identity function on $M$ here.
This amounts to saying that the $r$-tuple of functions $(s^\beta)$ solves the linear differential equation given by the $r\times r$ matrix $-A$ (i.e., the equation $y' = -Ay$), where $A = (A^\alpha_\beta)$.

By the (global) Picard-Lindelöf theorem the linear differential equation given by $-A$ possesses a unique vector solution $(\tilde s^\beta)$ on $M$ such that $\tilde s^\beta(t_0) = s^\beta(t_0)$ for all $\beta$; moreover, ${\tilde s^\beta}|_{M \setminus F} = s^\beta$ since $M \setminus F$ is connected (more so, an interval). Therefore, putting $\tilde s = \tilde s^\beta e_\beta$, we have $\tilde s \in \Gamma^\nabla(E)(M)$ and $\tilde s|_{M \setminus F} = s$. The restriction map in \cref{e_rest} is hence surjective implying that $F$ is negligible in $M$ for $(E,\nabla)$. As $(E,\nabla)$ was arbitrary, we have deduced \cref{i_intall}.
\end{exam}
%
%

\begin{prop}
\label{p_noextension}
For all natural numbers $n \geq 2$ there exists a connection $\nabla$ of class $C^\infty$ on the trivial bundle $E$ of rank $1$ over $\R^n$ such that the following assertions hold:
\begin{enumerate}
\item \label{i_trivialoutside} $\nabla$ is trivial on $\R^n \setminus [-1,1]^n$; that is, $\nabla_{\R^n}(e) = 0$ on $\R^n \setminus [-1,1]^n$, where $e \colon \R^n \to E = \R^n \times \R$ is given by $e(x) = (x,1)$.
\item \label{i_noextension} When $Q = [-1,1]^{n-1} \times [-1,0]$, there exists an element $t \in \Gamma^\nabla(E)(\R^n \setminus Q)$ such that $t$ agrees with $e$ on $\R^n \setminus [-1,1]^n$ and $t$ does not lie in the image of the restriction map
\[
\Gamma^\nabla(E)(\R^n) \to \Gamma^\nabla(E)(\R^n \setminus Q).
\]
\end{enumerate}
\end{prop}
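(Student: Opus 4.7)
The plan is to construct $\nabla$ via a smooth one-form $\omega$ on $\R^n$, using \cref{r_frame}: declare $\nabla_{\R^n}(e) := \omega \otimes e$. Condition~\cref{i_trivialoutside} then amounts to $\omega|_{\R^n \setminus [-1,1]^n} = 0$. I will arrange that $\omega$ is in fact compactly supported in the interior of $Q$ and that $d\omega$ does not vanish identically on $Q$. Granting such an $\omega$, I set $t := e|_{\R^n \setminus Q}$: since $\omega$ vanishes on $\R^n \setminus Q$, this $t$ is $\nabla$-parallel there, and it visibly agrees with $e$ on $\R^n \setminus [-1,1]^n$.

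To rule out any extension, suppose for contradiction that $\tilde t = \tilde g\, e$ lay in $\Gamma^\nabla(E)(\R^n)$ and restricted to $t$ on $\R^n \setminus Q$. Then $\tilde g \in C^\infty(\R^n)$ would satisfy $d\tilde g + \tilde g\, \omega = 0$ together with $\tilde g \equiv 1$ on $\R^n \setminus Q$. If $\tilde g$ vanished at some $p$, then $\tilde t$ and the zero section would be two $\nabla$-parallel sections on the connected open set $\R^n$ agreeing at $p$, hence equal by \cref{l_sectionsagree}, contradicting $\tilde g \equiv 1$ outside $Q$. So $\tilde g$ is nowhere zero; since it takes the value $1$ on a nonempty open set and $\R^n$ is connected, the intermediate value theorem forces $\tilde g > 0$ throughout. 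Then $\log \tilde g \in C^\infty(\R^n)$ and $d(\log \tilde g) = d\tilde g / \tilde g = -\omega$, so $\omega$ is exact on $\R^n$; in particular $d\omega \equiv 0$, contradicting the choice of $\omega$.

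It remains to exhibit such an $\omega$. I would pick a smooth function $\alpha \colon \R \to \R$ compactly supported in $(-1,1)$ and not constant, a smooth function $\beta \colon \R \to \R$ compactly supported in $(-1,0)$ and not identically zero, and (when $n \geq 3$) a smooth bump $\eta$ on $\R^{n-2}$ compactly supported in $(-1,1)^{n-2}$ and not identically zero. Setting
\[
\omega := \alpha(x_1)\,\eta(x_2,\dots,x_{n-1})\,\beta(x_n)\,dx_n
\]
(with $\eta \equiv 1$ understood when $n = 2$), the support of $\omega$ lies in the interior of $Q$, and the $dx_1 \wedge dx_n$-component of $d\omega$ equals $\alpha'(x_1)\,\eta(x_2,\dots,x_{n-1})\,\beta(x_n)$, which is nonzero at any common nonvanishing point of $\alpha'$, $\eta$, and $\beta$. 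The only technical point is arranging $\omega$ to be simultaneously compactly supported inside $Q$ and non-closed; the heart of the argument is the rigidity \cref{l_sectionsagree} applied to $\tilde t$ and the zero section, together with the observation that a nowhere-vanishing global $\nabla$-parallel section of $E$ is equivalent data to a smooth primitive of $-\omega$ on all of $\R^n$.
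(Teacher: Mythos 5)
Your proof is correct, but it runs along a genuinely different track from the paper's. You concentrate all of the connection form $\omega$ inside the interior of $Q$, take $t$ to be the restriction of the frame section $e$ itself, and derive non-extendability from a curvature obstruction: any global $\nabla$-parallel extension $\tilde g e$ would be nowhere zero by \cref{l_sectionsagree}, hence of constant sign by connectedness, hence would exhibit $\omega = -d(\log\tilde g)$ as exact and in particular closed, contradicting $d\omega \neq 0$. (The only point to say out loud is the small bootstrap: $\tilde g$ is a priori only $C^1$, but $d(\log\tilde g) = -\omega$ with $\omega$ smooth forces $\log\tilde g$ to be smooth, so $d\omega = -d^2(\log\tilde g) = 0$ is legitimate.) The paper instead spreads the support of $\omega$ over all of $[-1,1]^n$, so that its section $t$ is genuinely nonconstant ($1$ below the hyperplane $x_n = 0$, $1+f$ above it), and obtains the contradiction by a bare-hands ODE argument along the single line $\{0'\}\times\R$: Picard--Lindelöf pins any extension to the value $1$ for $x_n<0$, while $t_2(0',x_n)\to 1+b(0)^{n-1}>1$ as $x_n\to 0^+$, violating continuity. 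Your version is shorter and more conceptual, identifying nonvanishing curvature as the true obstruction to a global parallel frame of a line bundle; the paper's is more elementary (no exactness or Schwarz symmetry, only uniqueness of ODE solutions and continuity) and displays the failure concretely as a mismatch of the two branches of $t$ across the slit. Both constructions transplant equally well into the proof of \cref{c_nowheredense}, since your $t$ is also identically $e$ outside $[-1,1]^n$.
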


\begin{proof}
Let $n \in \N$, $n \geq 2$. Define $b \colon \R \to \R$ (a “bump”) by
\[
b(x) =
\begin{cases}
e^{-\frac1{1-x^2}} & \text{when $\abs{x} < 1$,} \\
0 & \text{when } \abs x \geq 1.
\end{cases}
\]
Then $b \in C^\infty(\R)$, the support of $b$ is contained in $[-1,1]$, we have $b \geq 0$ everywhere, yet $b$ is not identically zero (since $b(0) = e^{-1} > 0$).
We know there exist $g,h \in C^\infty(\R)$ such that, for all $x \in \R$, we have $g(x),h(x) \geq 0$, 
\[
g(x) = \begin{cases} 0 & \text{when } x < -1, \\ 1 & \text{when } 0 < x,\end{cases} \quad \text{and} \quad h(x) = \begin{cases} 0 & \text{when } 1 < x, \\ 1 & \text{when } x < 0.\end{cases}
\]
As a matter of fact, $g$ can be obtained by substituting $2x + 1$ for $x$ in
\[
\tilde g(x) = \frac{\int_{-1}^x b(y) \,dy}{\int_{-1}^1 b(y) \,dy};
\]
$h$ can then be obtained by letting $h(x) = g(-x)$.

Define
\[
f \colon \R^n \to \R, \quad f(x_1,\dots,x_n) = b(x_1) \cdot \ldots \cdot b(x_{n-1}) h(x_n).
\]
Moreover, for $i = 1, \dots, n$, define $\omega_i \colon \R^n \to \R$ by
\[
\omega_i(x) = - g(x_n) \frac{D_if(x)}{1 + f(x)} \quad \text{for } i \leq n-1, \quad \omega_n(x) = - \frac{D_nf(x)}{1 + f(x)}.
\]
Note that $\omega_i \in C^\infty(\R^n)$ for all $i$. We set
\[
\omega := \omega_1 dx^1 + \dots + \omega_n dx^n \in C^\infty(\R^n,T^\vee(\R^n)).
\]
By \cref{r_frame} there exists a unique connection $\nabla$ on $E$ such that
\[
\nabla_{\R^n}(e) = \omega \otimes e,
\]
where $e \colon \R^n \to E$ is given by $e(x) = (x,1)$; concretely, we have
\[
\nabla_U(s) = \paren{ds_2 + s_2(\omega|_U)} \otimes (e|_U)
\]
for all $U \subset \R^n$ open and all $s \in C^1(U,E)$, where $s_2$ denotes the second component of the function $s \colon U \to E = \R^n \times \R$ (the tensor product of sections is taken pointwise).

I claim that, for all $i = 1, \dots, n$, the function $\omega_i$ has support in $[-1,1]^n$. As a matter of fact, the support of $f$, whence the support of $D_if$, already lies inside $[-1,1]^{n-1} \times (-\infty,1]$. Thus for $i \leq n-1$ it suffices to note that we have $g(x_n) = 0$ for $x_n \in (-\infty,-1)$. For $i = n$ it suffices to note that the support of $h'$ lies inside $[0,1]$ as $h$ is constant on $\R \setminus [0,1]$; thus the support of $\omega_n$ is not only contained in $[-1,1]^n$, but contained in $[-1,1]^{n-1} \times [0,1]$. Anyways, the connection $\nabla$ is trivial on $\R^n \setminus [-1,1]^n$; that is, we have \cref{i_trivialoutside}.

It remains to verify \cref{i_noextension}. For that matter, define
\[
t_2 \colon \R^n \setminus Q \to \R, \quad t_2(x) = 1 + \begin{cases}
0 & \text{when } x_n \leq 0, \\
f(x) & \text{when } 0 < x_n
\end{cases}
\]
and
\[
t \colon \R^n \setminus Q \to E, \quad t(x) = (x,t_2(x)).
\]
Let $x \in \R^n \setminus [-1,1]^n$. Then $t(x) = e(x)$. When $x_n \leq 0$, this is evident. When $0 < x_n$, use the fact that $f$ has support lying inside $[-1,1]^{n-1} \times (-\infty,1]$. In particular, we see that $t$ is $\nabla$-parallel on $\R^n \setminus [-1,1]^n$.

On $\R^{n-1} \times (0,\infty)$, we have
\[
dt_2 + t_2\omega = \sum_{i=1}^n (D_if + (1 + f)\omega_i) dx^i = 0.
\]
In order to see that the summands $i = 1,\dots,n-1$ vanish, note that for $0 < x_n$ one has $g(x_n) = 1$. Thus $t$ is $\nabla$-parallel on $\R^{n-1} \times (0,\infty)$, too, whence $t$ is $\nabla$-parallel on $\R^n \setminus Q$, in symbols: $t \in \Gamma^\nabla(E)(\R^n \setminus Q)$.

Assume that $v \in \Gamma^\nabla(E)(\R^n)$ with $v|_{\R^n \setminus Q} = t$. Let $0' = (0,\dots,0) \in \R^{n-1}$. As noted above, we know that $\omega_n(0',x_n)$ vanishes for $x_n < 0$. Thus by the uniqueness in Picard-Lindelöf's theorem we infer that $v_2(0',x_n) = 1$ for all $x_n < 0$ since $v_2(0',-2) = t_2(0',-2) = 1$. On the other hand, for all $x_n > 0$, we have
\[
v_2(0',x_n) = t_2(0',x_n) = 1 + b(0)^{n-1}h(x_n),
\]
where the right-hand side tends, for $x_n \to 0$, to
\[
1 + b(0)^{n-1}h(0) = 1 + b(0)^{n-1} > 1.
\]
This contradicts the continuity of $v$ (or, more precisely, the continuity of the function $v_2(0',\_) \colon \R \to \R$). Therefore, there does not exist an element of $\Gamma^\nabla(E)(\R^n)$ which yields $t$ when restricted to $\R^n \setminus Q$.
\end{proof}

\begin{coro}
\label{c_nowheredense}
Let $M$ be a connected, Hausdorff manifold of dimension $\geq 2$, $F \neq M$ a closed subset of $M$ such that, for all connections $\nabla$ of class $C^{k-1}$ on the trivial bundle $E$ of rank $1$ over $M$, the set $F$ is negligible in $M$ for $(E,\nabla)$. Then $F$ is nowhere dense in $M$.
\end{coro}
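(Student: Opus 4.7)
I will argue by contradiction: assume $F$ is not nowhere dense in $M$. Since $F$ is closed this forces $\mathrm{int}(F) \neq \emptyset$, so I may pick $p \in \mathrm{int}(F)$. The plan is to construct, using a local chart around $p$, a $C^{k-1}$ connection $\nabla$ on the trivial rank-$1$ bundle $E = M \times \R$ for which the unit section $e|_{M \setminus F}$ is $\nabla$-parallel but cannot be extended to a $\nabla$-parallel section on $M$---contradicting the negligibility hypothesis.

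After shrinking a chart around $p$, I obtain $\phi \colon W \to V \subset \R^n$ with $W \subset \mathrm{int}(F) \subset F$. Because $n = \dim M \geq 2$, I can assemble compactly-supported smooth bump functions on $V$ into a $1$-form such as $\omega = b(x_1)\, b(x_2) \cdots b(x_n)\, dx^1$; a direct computation shows that $d\omega$ has a non-vanishing $dx^1 \wedge dx^2$ component, so $d\omega \not\equiv 0$ on $V$. Pulling $\omega$ back by $\phi$ and extending by zero yields a $C^{k-1}$ $1$-form $\omega_M$ on $M$ supported inside $W \subset F$. \Cref{r_frame} then furnishes a connection $\nabla$ on $E$ via $\nabla e = \omega_M \otimes e$, of class $C^{k-1}$; and since $\omega_M$ vanishes on $M \setminus F$, the restriction $e|_{M \setminus F}$ lies in $\Gamma^\nabla(M \setminus F, E)$.

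The negligibility hypothesis now produces a $\nabla$-parallel extension $\tilde s$ on $M$, which I write as $\tilde s = g\, e$ with $g \in C^1(M,\R)$ and $g \equiv 1$ on $M \setminus F$. Applying \cref{l_sectionsagree} to $\tilde s$ and the zero section on the connected manifold $M$, and using that $\tilde s = e$ is non-zero on the non-empty open set $M \setminus F$, shows that $\tilde s$ vanishes nowhere, whence $g > 0$ everywhere. In the chart I set $G := g \circ \phi^{-1}$; then $G > 0$ on $V$ and $dG = -G\omega$. Since $\omega$ is smooth, a standard bootstrap on this ODE upgrades $G$ from $C^1$ to $C^\infty$. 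Taking the exterior derivative of $dG = -G\omega$ and using $\omega \wedge \omega = 0$ then yields
\[
0 \;=\; d^2 G \;=\; d(-G\omega) \;=\; -dG \wedge \omega - G\, d\omega \;=\; G\,\omega \wedge \omega - G\, d\omega \;=\; -G\, d\omega,
\]
so $d\omega \equiv 0$ on $V$---contradicting the construction of $\omega$, and forcing $F$ to be nowhere dense.

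The subtle point, and the reason for pushing the final contradiction into the chart rather than working globally on $M$, is that when the manifold regularity is only $k = 1$, the form $\omega_M$ is merely $C^0$ on $M$ and has no classical exterior derivative there; inside the chart, however, $\omega$ is smooth and the ODE bootstrap makes $G$ smooth before $d^2 G = 0$ is invoked.
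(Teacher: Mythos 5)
Your argument is correct, and it takes a genuinely different route from the paper. The paper proves \cref{c_nowheredense} by transplanting the explicit connection of \cref{p_noextension} into a chart inside the interior of $F$: that connection is trivial outside a cube, carries a parallel section on the complement of a slab $Q$ whose two ``branches'' have incompatible one-sided limits along $\partial Q$, and the contradiction is extracted from the continuity of the putative global extension---at the price of the topological footnote showing $M \setminus A$ is connected and an application of \cref{l_sectionsagree} to identify the extension on that set. You instead use an integrability (curvature) obstruction: a nowhere-vanishing $\nabla$-parallel section of a line bundle forces the local connection form to satisfy $d\omega = 0$, so planting a non-closed form with compact support inside $\operatorname{int}(F)$ already rules out any global parallel extension of $e|_{M\setminus F}$, with no need to analyze the complement of $F$ at all. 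The nonvanishing of $\tilde s$ via \cref{l_sectionsagree} applied against the zero section, the positivity of $g$ by connectedness, and the $C^1\Rightarrow C^\infty$ bootstrap for $G$ from $dG=-G\omega$ with smooth $\omega$ (needed precisely because $\omega_M$ is only $C^{k-1}$, possibly $C^0$, globally) are all sound; note only that extending $\phi^*\omega$ by zero uses that $M$ is Hausdorff so that the compact support is closed in $M$---the same point the paper makes explicitly for $K$. What the paper's longer route buys is \cref{p_noextension} itself, which is a sharper statement (non-extendability across a set $Q$ with connected, dense complement) of independent interest for the rest of the paper; your route is shorter and more self-contained for the corollary as stated.
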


\begin{proof}
Assume that $p$ is an interior point of $F$ in $M$. Then there exists a natural number $n$ and a coordinate chart $x \colon U \to \R^n$ on $M$ at $p$ such that $[-1,1]^n \subset x(U)$ and $K := x^{-1}([-1,1]^n) \subset F$ (in fact, we can achieve $x(U) = \R^n$ and $U \subset F$). As $M$ is connected and $\dim M \geq 2$, we have $n \geq 2$. Since $K$ is compact in $M$ and $M$ is Hausdorff, $K$ is closed in $M$. Thus $\{M \setminus K,U\}$ constitutes an open cover of $M$.

In consequence, there exists a (unique) $1$-form $\tilde\omega \in C^{k-1}(M,T^\vee(M))$ such that $\tilde\omega$ is zero on $M \setminus K$ and
\[
\tilde\omega|_U = x^*(\omega) = (\omega_1 \circ x)dx^1 + \dots + (\omega_n \circ x)dx^n,
\]
where $\omega$ and the $\omega_i$, $i=1,\dots,n$, stem from \cref{p_noextension}---note that these are $C^\infty$ on $\R^n$. By \cref{r_frame} there exists a unique connection $\tilde\nabla$ on $E$ such that
\[
\tilde\nabla_M(e) = \tilde\omega \otimes e,
\]
the section $e \colon M \to E$ being given by $e(q) = (q,1)$. Observe that $\tilde\nabla$ is of class $C^{k-1}$ as $\tilde\omega$ is.
We know there exists $t$ as in \cref{i_noextension} of \cref{p_noextension}. Write $t_2$ for the second component of $t \colon \R^n \setminus Q \to \R^n \times \R$. Then there exists a function $\tilde t \colon M \setminus A \to E$, where $A := x^{-1}([-1,1]^{n-1} \times [-1,0])$, such that
\[
\tilde t(q) = (q,\tilde t_2(q)), \quad \tilde t_2(q) = \begin{cases}
1 & \text{when } q \in M \setminus K, \\
t_2(x(q)) & \text{when } q \in U \setminus A
\end{cases}
\]
for all $q \in M \setminus A$.
Clearly $\tilde t$ is $\tilde\nabla$-parallel since it is parallel on $M \setminus K$ and parallel on $U \setminus A$. Therefore,
\[
\tilde t|_{M \setminus F} \in \Gamma^{\tilde\nabla}(M \setminus F,E).
\]
So as $F$ is negligible in $M$ for $(E,\tilde\nabla)$, there exists a $\tilde\nabla$-parallel section $\tilde v$ on $M$ such that $\tilde v$ restricted to $M \setminus F$ yields ${\tilde t}|_{M \setminus F}$.

Now since $F \neq M$, there exists an element $p' \in M \setminus F$. Thus $\tilde t(p') = \tilde v(p')$. As $M$ is connected, the set $M \setminus A$ is connected in $M$.\footnote{An elementary way to see this is the following: Take $p_1,p_2 \in M \setminus A$. Since $M$ is path-connected, there exists a path $\gamma \colon [0,1] \to M$ from $p_1$ to $p_2$. When $\gamma$ does not meet $A$, we are done. When $\gamma$ meets $A$, let $t_1,t_2$ be the infimum and supremum of $\gamma^{-1}(A)$, respectively. A path $\delta$ from $p_1$ to $p_2$ in $M \setminus A$ is obtained traversing $\gamma$ from $0$ to $t_1 - \epsilon$ (for suitable $\epsilon >0$), then using the chart $x$ to go from $\gamma(t_1 - \epsilon)$ to $\gamma(t_2 + \epsilon)$ inside $U\setminus A$, then traversing $\gamma$ from $t_2 + \epsilon$ to $1$.} So by \cref{l_sectionsagree}, we see that
\[
\tilde v|_{M \setminus A} = \tilde t.
\]
Thus $\tilde v$ furnishes a $\tilde\nabla$-parallel extension of $\tilde t$ to all of $M$. In turn, $\tilde v$ gives rise to a $\nabla$-parallel extension of $t$ to all of $\R^n$. This, however, contradicts \cref{i_noextension} of \cref{p_noextension}.
\end{proof}
%

\section{Extension results}
\label{s_hyper}

In this \namecref{s_hyper} we prove that parallel sections in vector bundles can be extended over certain good-natured sets. The essential technique I use to extend parallel sections is the following result on the regularity of solutions of linear differential equations depending on parameters.

The regularity theorem is formulated for arbitrary Banach spaces, even though in our applications (\cref{l_hyper}) the Banach spaces are finite-dimensional. By a \emph{Banach space} I mean a real Banach space here. Given a Banach space $E$, I write $\cL(E)$ for the Banach space of continuous linear operators on $E$.

\begin{theo}
\label{t_parameters}
Let $J \subset \R$ be an open interval, $t_0 \in J$, $E$ and $F$ Banach spaces, $V \subset F$ open,
\[
A \colon J \times V \to \cL(E)
\]
a continuous map.
\begin{enumerate}
\item \label{i_parameters-ex} There exists a unique continuous map
$
X \colon J \times V \to \cL(E)
$
such that
\[
X(t_0,y) = \id_E, \quad \forall y \in Y,
\]
and
\[
(D_1X)(t,y) = A(t,y)X(t,y), \quad \forall (t,y) \in J \times V.
\]
\item \label{i_parameters-c1} When $X$ is as in \cref{i_parameters-ex} and $A$ is of class $C^1$, then $X$ is of class $C^1$.
\end{enumerate}
\end{theo}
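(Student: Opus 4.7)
The plan for \cref{i_parameters-ex} is standard Banach-valued Picard iteration adapted to carry the parameter $y$ along. I would fix a compact subinterval $I \subset J$ containing $t_0$ and a compact neighborhood $K$ of an arbitrary $y_0 \in V$, bound $M := \sup_{I\times K}\norm{A(t,y)}<\infty$ by continuity of $A$, and iterate in $C(I\times K, \cL(E))$ with the supremum norm: set $X_0 \equiv \id_E$ and $X_{n+1}(t,y) := \id_E + \int_{t_0}^t A(s,y) X_n(s,y)\, ds$. An induction gives $\norm{X_{n+1}-X_n}_\infty \le (M\abs{I})^{n+1}/(n+1)!$, so the telescoping series converges uniformly to a jointly continuous limit $X$ which satisfies the integral equation $X(t,y) = \id_E + \int_{t_0}^t A(s,y) X(s,y)\, ds$, and hence $D_1 X = AX$ by the fundamental theorem of calculus. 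Uniqueness on $I\times K$ is Gronwall applied to the difference of two candidate solutions; exhausting $J$ by compact subintervals and $V$ by compacta and gluing via local uniqueness yields the global $X$ on $J\times V$.

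For \cref{i_parameters-c1} I would argue that since $D_1 X = AX$ is already jointly continuous, it suffices to construct the partial derivative $D_2 X : J\times V \to \cL(F, \cL(E))$ and show it is continuous. Formal differentiation of $D_1 X = AX$ in a parameter direction $h \in F$ suggests the variational equation $D_1 Z = A Z + (D_2 A \cdot h) X$ with $Z(t_0)=0$, whose variation-of-constants solution is the natural candidate
\[
Y(t,y) h := \int_{t_0}^t X(t,y)\, X(s,y)^{-1} \bigl(D_2 A(s,y)\, h\bigr) X(s,y)\, ds.
\]
The invertibility of $X(s,y)$ used here follows from \cref{i_parameters-ex} applied to the equation $D_1 W = -WA$, $W(t_0)=\id_E$: the resulting $W(\cdot,y)$ is both a left and a right inverse of $X(\cdot,y)$ by the uniqueness half of \cref{i_parameters-ex} (the constant $\id_E$ and $W X$, respectively $XW$, satisfy the same linear ODE with the same initial value). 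The map $Y$ is manifestly linear in $h$, jointly continuous in $(t,y)$, and of locally bounded operator norm.

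It then remains to verify $D_2 X = Y$, i.e.\ that the remainder $R(t,y,h) := X(t, y+h) - X(t,y) - Y(t,y) h$ satisfies $\norm{R(t,y,h)} = o(\norm{h})$ uniformly for $t$ in compact subsets of $J$. The $C^1$ hypothesis on $A$ yields an expansion $A(s, y+h) = A(s,y) + D_2 A(s,y) h + \rho(s,y,h)$ with $\sup_s\norm{\rho(s,y,h)} \le \varepsilon(h)\norm{h}$ and $\varepsilon(h) \to 0$, while a preliminary Gronwall argument on compacta provides a Lipschitz estimate $\norm{X(\cdot, y+h) - X(\cdot, y)}_\infty \le C\norm{h}$. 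Substituting the integral equations for $X(t,y)$, $X(t, y+h)$, and $Y(t,y)h$ into $R$ and collecting terms leads to an integral inequality of the form
\[
\norm{R(t,y,h)} \le C\varepsilon(h)\norm{h} + C\int_{t_0}^t \norm{R(s,y,h)}\, ds,
\]
whence Gronwall forces the desired $o(\norm{h})$ bound. Joint continuity of $D_2 X = Y$ together with $D_1 X = AX$ then yields $X \in C^1$. The main obstacle in this program is precisely the remainder estimate: one has to control the bilinear cross-term $\bigl(A(s,y+h) - A(s,y)\bigr)\bigl(X(s,y+h) - X(s,y)\bigr)$ to genuine $o(\norm{h})$ order, for which the a priori Lipschitz bound on $X(\cdot, y+h) - X(\cdot, y)$ combined with $C^1$-regularity of $A$ is exactly what is needed; everything else reduces to the Banach-space-valued Picard/Gronwall toolkit of part \cref{i_parameters-ex}.
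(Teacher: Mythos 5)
Your proposal is correct in substance but takes a genuinely different route from the paper. The paper proves \cref{i_parameters-ex} by citing Lang's result on linear equations with parameters, and proves \cref{i_parameters-c1} by encoding the parameter into the state: it regards $f(t,(y,x)) = (0, A(t,y)x)$ as a time-dependent vector field on $V \times \cL(E)$, observes that $\alpha(t,(y,x)) = (y, X(t,y)x)$ is its flow, and invokes the general theorem on $C^1$ dependence of flows of $C^1$ vector fields. You instead give a self-contained argument: Picard iteration with the parameter carried along for \cref{i_parameters-ex}, and for \cref{i_parameters-c1} an explicit candidate for $D_2X$ via the variational equation and variation of constants, validated by a Gronwall remainder estimate. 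Your route exploits the linearity of the equation (global-in-time solutions, an explicit formula for $D_2X$, invertibility of $X(s,y)$ via the adjoint equation $D_1W = -WA$), whereas the paper's route is shorter on the page but leans on the nonlinear flow-smoothness machinery. Both are legitimate; yours is arguably more informative since it exhibits $D_2X$ explicitly.

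One technical point needs repair because the theorem is stated for arbitrary Banach spaces $F$: a point of an infinite-dimensional Banach space has no compact neighborhood, so you cannot take ``a compact neighborhood $K$ of $y_0$'' and conclude $\sup_{I\times K}\norm{A} < \infty$ from continuity, nor can you ``exhaust $V$ by compacta.'' The fix is routine: apply the tube lemma to the compact slice $I \times \{y_0\}$ to obtain an open ball $B$ about $y_0$ with $A$ bounded on $I \times B$ (and, for the $C^1$ part, with $\sup_{s\in I}\norm{D_2A(s,y') - D_2A(s,y_0)}$ small for $y' \in B$, which is what your uniform estimate on $\rho(s,y,h)$ actually requires); then cover $J \times V$ by such sets $I \times B$ and glue using the local uniqueness you already established. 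With that adjustment the iteration, the Lipschitz bound on $X(\cdot,y+h)-X(\cdot,y)$, and the $o(\norm{h})$ control of the cross-term all go through as you describe.
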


\begin{proof}
When $t_0 = 0$, \cref{i_parameters-ex} is precisely \cite[IV, Proposition 1.9]{lang}. The general statement (arbitrary $t_0$) follows considering the shifted interval $J - t_0$ instead of $J$ and translating $A$ and $X$ accordingly.

Now let $X$ be as described in \cref{i_parameters-ex}. Set $U := V \times \cL(E)$ and define
\[
f \colon J \times U \to F \times \cL(E), \quad f(t,(y,x)) = (0,A(t,y)x).
\]
Regard $f$ as a time-dependent vector field on the open subset $U$ of the Banach space $F \times \cL(E)$. Then, as one verifies easily, the map
\[
\alpha \colon J \times U \to U, \quad \alpha(t,(y,x)) = (y,X(t,y)x)
\]
constitutes a (global) flow for $f$ with initial time $t_0$; that is, for all $(y,x) \in U$ we have
\[
D_1\alpha(t,(y,x)) = f(t,\alpha(t,(y,x))), \quad \forall t \in J, \quad \text{and} \quad \alpha(t_0,(y,x)) = (y,x).
\]
Assume that $A$ is of class $C^1$. Then $f$ is of class $C^1$, mainly because the multiplication on $\cL(E)$, $\cL(E) \times \cL(E) \to \cL(E)$, which is given as the composition of self-maps on $E$, is of class $C^1$. Adapting the proof of \cite[IV, Theorem 1.16]{lang} to cover time-dependent (as opposed to time-independent) vector fields, we find that $\alpha$ is of class $C^1$.\footnote{The decisive point here is to pass from the time-dependent vector field $f$ to its associated time-independent vector field $\bar f$ as explained on \cite[71]{lang}. Then you use the existence of local flows of class $C^1$ for $\bar f$---that is, \cite[IV, Theorem 1.14]{lang}.} In consequence, $X$ is of class $C^1$ as it equals the composition of the embedding $J \times V \to J \times U$, $(t,y) \mapsto (t,(y,\id_E))$, the map $\alpha$, and the projection $F \times \cL(E) \to \cL(E)$ to the second factor. Thus we have proven \cref{i_parameters-c1}.
\end{proof}

\begin{lemm}
\label{l_compactconv}
Let $I,J \subset \R$ be open intervals, $E$ a Banach space, $f \in C^0(I \times J,E)$.
\begin{enumerate}
\item \label{i_compactconv} For all $x_0 \in I$, all $\epsilon > 0$, and all $K \subset J$ compact, there exists a number $\delta > 0$ such that
\begin{equation}
\label{e_compactconv}
\norm{f(x,y) - f(x_0,y)} < \epsilon
\end{equation}
for all $x \in I$ with $\abs{x - x_0} < \delta$ and all $y \in K$.
\item \label{i_diffsecond} When $C$ is nowhere dense in $I$ and $g \in C^0(I \times J,E)$ such that $D_2f = g$ on $(I \setminus C) \times J$ (in particular, $f$ is assumed to be partially differentiable in its second variable on $(I \setminus C) \times J$), then we have $D_2f = g$ everywhere on $I \times J$.
\end{enumerate}
\end{lemm}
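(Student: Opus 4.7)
The plan is to prove part \cref{i_compactconv} by a direct compactness argument and then to deploy it as the key ingredient in part \cref{i_diffsecond}, which I intend to reduce to an integral identity valid on all of $I\times J$ via density of the complement of $C$.

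For \cref{i_compactconv} I would, for each $y \in K$, use joint continuity of $f$ at $(x_0, y)$ to produce a $\delta_y > 0$ and an open neighborhood $V_y$ of $y$ in $J$ on which
\[
\norm{f(x, y') - f(x_0, y)} < \epsilon/2
\]
whenever $\abs{x - x_0} < \delta_y$ and $y' \in V_y$. Compactness of $K$ then yields finitely many $V_{y_1}, \dots, V_{y_n}$ covering $K$; setting $\delta := \min_i \delta_{y_i}$ and splitting each desired estimate through the basepoint $f(x_0, y_i)$ via the triangle inequality gives \cref{e_compactconv}. This is routine.

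For \cref{i_diffsecond} I would fix $(x_0, y_0) \in I \times J$ and pick $r > 0$ small enough that $[y_0 - r, y_0 + r] \subset J$. For $h \in [-r, r]$ I would introduce the continuous function
\[
F_h \colon I \to E, \quad F_h(x) := f(x, y_0 + h) - f(x, y_0) - \int_0^h g(x, y_0 + s)\,ds,
\]
the integral being understood as the Riemann integral of a continuous Banach-valued function. Continuity of $F_h$ is the one nontrivial point: continuity in $x$ of the first two terms is inherited from $f$, while continuity in $x$ of the parameter integral is precisely where \cref{i_compactconv} (applied to $g$ with $K = [y_0 - r, y_0 + r]$) is indispensable, since the uniform convergence it provides may be interchanged with integration over a bounded interval. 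For $x \in I \setminus C$ the map $s \mapsto f(x, y_0 + s)$ is $C^1$ on $[-r, r]$ with derivative $s \mapsto g(x, y_0 + s)$, so the fundamental theorem of calculus for Banach-valued functions forces $F_h(x) = 0$. As $C$ is nowhere dense, $I \setminus C$ is dense in $I$, and continuity of $F_h$ then yields $F_h \equiv 0$ on $I$. Specializing to $x = x_0$, dividing by $h$, and letting $h \to 0$ gives $(D_2 f)(x_0, y_0) = g(x_0, y_0)$ by continuity of $g(x_0, \cdot)$ at $y_0$.

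The main obstacle, modest as it is, is the continuity of the parameter integral in $x$; this is exactly the content of \cref{i_compactconv}, so the two parts of the lemma are tightly linked by design. Everything else amounts to bookkeeping with the fundamental theorem of calculus in a Banach space and the density of $I \setminus C$.
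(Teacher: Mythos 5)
Your proposal is correct and follows essentially the same route as the paper: part \cref{i_compactconv} by the standard finite-subcover argument, and part \cref{i_diffsecond} by establishing the integral identity $f(x,y_0+h)-f(x,y_0)=\int_0^h g(x,y_0+s)\,ds$ on the dense set $I\setminus C$ via the fundamental theorem of calculus and transferring it to all of $I$ by continuity, with \cref{i_compactconv} supplying the needed uniform control of the parameter integral. The only cosmetic difference is that you package the paper's explicit $(2+\lambda(J'))\epsilon$ estimate as the continuity of the auxiliary function $F_h$, which is a perfectly adequate rephrasing.
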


\begin{proof}
\Cref{i_compactconv} is fairly standard. The continuity of $f$ in the points of $\{x_0\} \times K$ entails that the set of all sets $U = B_\delta(x_0) \times V$ with $\delta > 0$ and $V \subset \R$ open such that \cref{e_compactconv} holds for all $(x,y) \in U \cap (I \times J)$ furnishes an open cover of $\{x_0\} \times K$. Since $\{x_0\} \times K$ is compact in $\R^2$, there exists a finite subcover. The minimal $\delta$ of the sets $U$ in this subcover then possess the desired property.

Now, let $C$ and $g$ be as in \cref{i_diffsecond}. Then we have
\begin{equation}
\label{e_diffsecond}
f(x,y) - f(x,y_0) = \int_{y_0}^y g(x,\_) \,d\lambda
\end{equation}
for all $x \in I \setminus C$ and all $y_0,y \in J$. Let $x_0 \in I$, $J' \subset J$ a compact subinterval, $\epsilon > 0$. Then by \cref{i_compactconv} there exists a number $\delta > 0$ such that \cref{e_compactconv} holds for all $x \in I$ with $\abs{x - x_0} < \delta$ and all $y \in J'$ as well as for $f$ replaced by $g$ (i.e., you apply \cref{i_compactconv} twice, once for $f$, once for $g$ in place of $f$, then you pass to the minimum of the two $\delta$'s). Since $C$ is nowhere dense in $I$, there exists $x \in I \setminus C$ such that $\abs{x - x_0} < \delta$. Therefore, using \cref{e_diffsecond}, we obtain
\[
\norm*{f(x_0,y) - f(x_0,y_0) - \int_{y_0}^y g(x_0,\_) \,d\lambda} < (2 + \lambda(J'))\epsilon
\]
for all $y_0,y \in J'$. As $\epsilon > 0$ was arbitrary, we deduce
\[
f(x_0,y) - f(x_0,y_0) = \int_{y_0}^y g(x_0,\_) \,d\lambda
\]
for all $y_0,y \in J'$. Indeed, the latter equality holds for all $y_0,y \in J$ since we can pick $J' = [y_0,y]$ or $J' = [y,y_0]$ depending on whether $y_0 \leq y$ or $y < y_0$ (note that these $J'$ are subsets of $J$ as $J$ was assumed to be an interval). In turn, the function $f(x_0,\_) \colon J \to E$ is differentiable on $J$, its derivative being equal to $g(x_0,\_)$. As $x_0 \in I$ was arbitrary, we infer that the function $f$ is partially differentiable on $I \times J$ with respect to the second variable; moreover, $D_2f = g$ holds on $I \times J$, which was to be demonstrated.
\end{proof}
%

\begin{lemm}
\label{l_hyper}
Let $n \in \N$, $2 \leq n$, $M$ an open $n$-dimensional interval---that is, there are open intervals $I_i \subset \R$, $i = 1,\dots,n$, such that $M = I_1 \times \dots \times I_n$ with its canonical manifold structure---, and $F$ a closed subset of $M$.
\begin{enumerate}
\item \label{i_hyperdiscrete} When $C_2 \subset I_2$ is discrete, $b_1 \in I_1$, and
\begin{equation}
\label{e_hyperhalf}
F \subset \{x \in M \mid b_1 \leq x_1, x_2 \in C_2\},
\end{equation}
then $F$ is negligible in $M$ for all connections of class $C^0$.
\item \label{i_hypernowheredense} When $C_2 \subset I_2$ is nowhere dense, $b_1 \in I_1$, and we have \cref{e_hyperhalf}, then $F$ is negligible in $M$ for all connections of class $C^1$.
\item \label{i_hyperbi} When $C_1 \subset I_1$ and $C_2 \subset I_2$ are nowhere dense, $b_1 \in I_1$, $b_2 \in I_2$, and
\[
F \subset \{x \in M \mid b_1 \leq x_1, x_2 \in C_2\} \cap \{x \in M \mid b_2 \leq x_2, x_1 \in C_1\},
\]
then $F$ is negligible in $M$ for all connections of class $C^0$.
\end{enumerate}
\end{lemm}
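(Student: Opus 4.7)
The plan is to build the extension by explicit parallel transport along coordinate directions, using \cref{t_parameters} to control the transport operator. Since $M = I_1\times\cdots\times I_n$ is contractible, every vector bundle over $M$ is trivializable of class $C^l$; picking a frame reduces the problem to extending a parallel vector-valued function, and parallelism of $s = s^\alpha e_\alpha$ becomes the linear ODE system $\partial_i s = -\omega_i s$ with continuous matrix-valued coefficients $\omega_i$ (of class $C^1$ in part \cref{i_hypernowheredense}). Alternatively one may reduce to this trivial case via \cref{p_local} with a cover by open sub-boxes on which $E$ is trivial.

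Pick $a_1\in I_1$ with $a_1<b_1$, possible because $b_1$ is interior to the open interval $I_1$. By part \cref{i_parameters-ex} of \cref{t_parameters} there is a continuous $X(x_1,y)$, $y=(x_2,\dots,x_n)$, solving the $x_1$-parallel-transport ODE with $X(a_1,y)=\id$. Since the slice $\{x_1=a_1\}$ lies inside $M\setminus F$ (as $F\subset\{x_1\ge b_1\}$), $\sigma(y):=s(a_1,y)$ is continuous, and I set
\[
\tilde s(x_1,y) := X(x_1,y)\,\sigma(y),
\]
which is continuous on $M$ with $\partial_1\tilde s=-\omega_1\tilde s$ by construction. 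The set $\{x_2\notin C_2\}$ is open and dense in $M$ (a discrete subset of the open interval $I_2$ is nowhere dense) and is contained in $M\setminus F$ because $F\subset\{x_2\in C_2\}$; on each $x_1$-line inside it both $s$ and $\tilde s$ solve the same $x_1$-ODE with the same value at $x_1=a_1$, so $\tilde s=s$ there. Continuity of both sides on the open set $M\setminus F$ upgrades this to $\tilde s=s$ on all of $M\setminus F$.

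For parts \cref{i_hyperdiscrete} and \cref{i_hypernowheredense} it remains to check $\partial_i\tilde s=-\omega_i\tilde s$ for $i\ge 2$. The directions $i\ge 3$ are uniform: on $\{x_2\notin C_2\}$ the section $\tilde s=s$ is $C^1$ and the equation holds; fixing every coordinate except $x_2$ and $x_i$ puts us in the setting of part \cref{i_diffsecond} of \cref{l_compactconv} with $I=I_2$, $J=I_i$, $C=C_2$, which promotes the equation to all of $M$. For $i=2$ the two parts differ. In part \cref{i_hypernowheredense}, the $C^1$ hypothesis on the connection makes $X$ of class $C^1$ by part \cref{i_parameters-c1} of \cref{t_parameters}, hence $\tilde s$ is $C^1$, so the two continuous functions $\partial_2\tilde s$ and $-\omega_2\tilde s$ agree on the dense set $\{x_2\notin C_2\}$ and hence on all of $M$. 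In part \cref{i_hyperdiscrete}, the discreteness of $C_2$ isolates each bad slice $x_2=c$ inside a punctured neighborhood where $\tilde s=s$ is $C^1$, and integration combined with the continuity of $\tilde s$ across $c$ yields differentiability at $c$ with the correct derivative via the fundamental theorem of calculus.

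For part \cref{i_hyperbi} I additionally construct the symmetric candidate $\tilde s'$ by $x_2$-parallel transport from an anchor $a_2<b_2$. The same reasoning gives $\tilde s=s=\tilde s'$ on $M\setminus F$, and since $F$ is nowhere dense in $M$ (its projection to the $(x_1,x_2)$-plane lies in $C_1\times C_2$, a product of nowhere-dense sets), continuity forces $\tilde s=\tilde s'$ on all of $M$. The common extension is then parallel in $x_1$ by $\tilde s$'s construction and parallel in $x_2$ by $\tilde s'$'s; the directions $i\ge 3$ are handled as before but with $I=I_1$, $J=I_i$, $C=C_1$ in part \cref{i_diffsecond} of \cref{l_compactconv}, using $\partial_i\tilde s=-\omega_i\tilde s$ on the open dense subset $\{x_1\notin C_1\}\subset M\setminus F$. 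The central technical obstacle throughout is parallelism of $\tilde s$ along a direction that coincides with the direction of the bad set: Cantor-staircase examples preclude direct use of part \cref{i_diffsecond} of \cref{l_compactconv} in that situation, so each part invokes a different compensating hypothesis—$C^1$-regularity of the connection in \cref{i_hypernowheredense}, discreteness combined with FTC in \cref{i_hyperdiscrete}, and a second, symmetric parallel transport in \cref{i_hyperbi}.
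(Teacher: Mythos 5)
Your proposal is correct and follows essentially the same route as the paper: trivialize $E$ over the box, define $\tilde s$ by $x_1$-parallel transport from an anchor slice $x_1=a_1<b_1$ via \cref{t_parameters}, identify $\tilde s$ with $s$ on the dense set $\{x_2\notin C_2\}$ and hence on $M\setminus F$, handle the directions $i\ge3$ with \cref{l_compactconv}, and treat the $x_2$-direction by the isolated-point argument, the $C^1$ regularity of $X$, or the symmetric transport $\tilde s'$, exactly as in the paper. The only (immaterial) deviations are the use of the fundamental theorem of calculus in place of the mean value theorem at isolated points of $C_2$, and the use of $C_1$ rather than $C_2$ for the $i\ge3$ directions in part \cref{i_hyperbi}.
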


\begin{proof}
The arguments for the three parts of the lemma all start the same. Let $(E,\nabla)$ be a vector bundle with connection over $M$. Since $M$ is paracompact, Hausdorff, and $C^k$ contractible---that is, there exists a homotopy $M \times [0,1] \to M$ of class $C^k$, in the sense of manifolds with boundary, from the identity on $M$ to a constant map $M \to \{c\}$, $c \in M$---the vector bundle $E$ is trivial as a vector bundle of class $C^l$.\footnote{Note that by \cite[Theorem 1.6]{hatcher} the vector bundle $E$ is trivial in the sense of topological vector bundles. However, the pivotal \cite[Proposition 1.7]{hatcher} carries over nicely to the $C^l$ manifold context; the critical point is to see that in “preliminary facts (1)” of the proof, the patching together of the two trivializations can be realized within class $C^l$, at least for nice $X$.\label{f_vb}} In particular, there exists a global frame $e = (e_1,\dots,e_r)$ for $E$. Denote by
\[
\omega = \omega_1 dx^1 + \dots + \omega_n dx^n \in C^0(M,T^\vee(M))^{r \times r}
\]
the connection form of $\nabla$ with respect to $e$ (see \cref{r_frame}); here, the $\omega_i$ are continuous functions on $M$ with values in $\R^{r \times r}$. We identify $\R^{r\times r}$ with $\cL(\R^r)$.

Let $b_1 \in I_1$ and $C_2 \subset I_2$. Since $I_1$ is open, there exists $a_1 \in I_1$ such that $a_1 < b_1$. By \cref{t_parameters}, \cref{i_parameters-ex}, we know that there exists a unique continuous function
\[
X \colon M = I_1 \times (I_2 \times \dots \times I_n) \to \cL(\R^r)
\]
such that, for all $x' = (x_2,\dots,x_n) \in I_2 \times \dots \times I_n$, we have
\[
X(a_1,x') = \id_{\R^r}
\]
and, for all $x_1 \in I_1$,
\[
(D_1X)(x_1,x') + \omega_1(x_1,x')X(x_1,x') = 0.
\]

Assume \cref{e_hyperhalf} and let $s \in \Gamma^\nabla(M \setminus F,E)$. By abuse of notation, we write $s$ also for the $C^1$ function $M \setminus F \to \R^r$ that represents $s$ with respect to $e$. We define
\[
\tilde s \colon M \to \R^r, \quad \tilde s(x_1,x') = X(x_1,x')s(a_1,x').
\]
Obviously $\tilde s \colon M \to \R^r$ is continuous and partially differentiable in the direction of $x_1$ so that
\[
D_1{\tilde s} + \omega_1{\tilde s} = 0
\]
on $M$---in particular, we see that $D_1{\tilde s}$ is continuous. Since $\nabla_{M \setminus F}(s) = 0$, we have
\[
D_1s + \omega_1s = 0
\]
on $M \setminus F$. Since $F \subset \{x \in M \mid x_2 \in C_2\}$ by \cref{e_hyperhalf}, we have
\[
I_1 \times \{x'\} \subset M \setminus F
\]
for all $x' \in I_2 \times \dots \times I_n$ with $x_2 \notin C_2$. Since $\tilde s(a_1,x') = s(a_1,x')$, the Picard-Lindelöf theorem implies that $\tilde s = s$ on $I_1 \times \{x'\}$ for all such $x'$; that is, $\tilde s$ and $s$ agree on $\{x \in M \mid x_2 \notin C_2\}$.

Suppose that $C_2$ is nowhere dense in $I_2$. Then $I_1 \times C_2 \times I_3 \times \dots \times I_n$ is nowhere dense in $M$. Thus the set where $\tilde s$ and $s$ agree is dense in $M$ and, in turn, dense in $M \setminus F$. Since $\tilde s$ and $s$ are both continuous, this implies ${\tilde s}|_{M \setminus F} = s$. In other words, we have found a continuous extension of $s$ to all of $M$ which already satisfies $\nabla = 0$ in the direction of $x_1$.

Let $i \in \N$, $3 \leq i \leq n$. I claim that $\tilde s$ is partially differentiable in the direction of $x_i$ such that
\[
D_i{\tilde s} + \omega_i{\tilde s} = 0
\]
holds on $M$. For that matter, let $x^0 = (x^0_1,\dots,x^0_n) \in M$. Define
\[
j \colon I_2 \times I_i \to M, \quad j(x_2,x_i) = (x^0_1,x_2,x^0_3,\dots,x^0_{i-1},x_i,x^0_{i+1},\dots,x^0_n)
\]
and
\[
f,g \colon I_2 \times I_i \to \R^r, \quad f = {\tilde s} \circ j, \quad g = - ((\omega_i{\tilde s}) \circ j).
\]
Then $f$ and $g$ are both continuous. Moreover, $f$ is partially differentiable with respect to its second variable on $(I_2 \setminus C_2) \times I_i$ so that
\[
D_2f = (D_i{\tilde s}) \circ j = - ((\omega_i{\tilde s}) \circ j) = g.
\]
Thus by means of \cref{l_compactconv}, \cref{i_diffsecond}, we see that $f$ is partially differentiable with respect to its second variable on $I_2 \times I_i$ such that $D_2f = g$. Specifically, we have this identity in $(x^0_2,x^0_i) \in I_2 \times I_i$. In consequence, $\tilde s$ is partially differentiable in the direction of $x_i$ at $x^0$, and we have
\[
(D_i{\tilde s})(x^0) = (D_2f)(x^0_2,x^0_i) = g(x^0_2,x^0_i) = - (\omega_i\tilde s)(x^0).
\]
As $x^0 \in M$ was arbitrary, my claim is proven.

Under the current assumptions we cannot conclude that $\tilde s$ is partially differentiable in the direction of $x_2$ in points of $F$. We need further suppositions. So, let $x^0 \in F$ and assume that $x^0_2$ is an isolated point of $C_2$. Then an easy application of the mean value theorem shows that $\tilde s$ is partially differentiable in the direction of $x_2$ at $x^0$ with
\[
(D_2{\tilde s})(x^0) = - (\omega_2{\tilde s})(x^0).
\]
This observation proves \cref{i_hyperdiscrete} (all points of $C_2$ are isolated).

When the connection $\nabla$ is of class $C^1$, then the $\omega_i \colon M \to \R^{r \times r} \cong \cL(\R^r)$ are all of class $C^1$ ($i = 1, \dots, n$). The fact that $\omega_1$ is of class $C^1$ implies (by means of \cref{t_parameters}, \cref{i_parameters-c1}) that $X$ and, in turn, $\tilde s$ is of class $C^1$. Specifically, $\tilde s$ is partially differentiable in the direction of $x_2$ with continuous derivative $D_2{\tilde s}$. Since $D_2{\tilde s} + \omega_2\tilde s = 0$ holds on $M \setminus F$ and $M \setminus F$ lies dense in $M$, we infer that the latter equation holds on all of $M$ by the continuity of its left-hand side. This observation proves \cref{i_hypernowheredense}.

In order to prove \cref{i_hyperbi} you conduct the arguments that lead up to \cref{i_hyperdiscrete} once again, only with indices $1$ and $2$ swapped. This procedure yields a second extension $\tilde s_2$ of $s$ to $M$, of which we know a priori that it is partially differentiable in the direction of $x_2$. Since $\tilde s_2$ is, just as $\tilde s$, continuous, we conclude that $\tilde s_2 = \tilde s$ by means of the density of $M \setminus F$ in $M$.
\end{proof}

\begin{coro}
\label{c_counterex-c0}
Let $n$ and $M$ be as in \cref{l_hyper}. Then there exists $F$ such that
\begin{enumerate}
\item \label{i_negc1} $F$ is negligible in $M$ for all connections of class $C^1$---in particular, $F$ is a closed nowhere dense subset of $M$ with $M \setminus F$ being connected---,
\item \label{i_nnegc0} there exists a connection $\nabla$ of class $C^0$ on the trivial bundle $E$ of rank $1$ over $M$ such that $F$ is not negligible in $M$ for $(E,\nabla)$---in particular, $F$ is not negligible in $M$ for all connections of class $C^0$---,
\item \label{i_nullset} $\lambda^n(F) = 0$, where $\lambda^n$ denotes the Lebesgue measure on $\R^n$.
\end{enumerate}
\end{coro}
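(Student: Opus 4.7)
My plan is to adapt the construction of \cref{p_noextension} by replacing its smooth ``step'' function $g$ with a Cantor--Lebesgue staircase, and to take for $F$ the product of compact intervals with a measure-zero Cantor set. I first reduce to the case $n = 2$: for $n \geq 3$, the two-dimensional $F$ and $\omega$ are extended trivially by taking a product with closed boxes in the extra coordinates and letting the connection form have vanishing components in those directions, so that the candidate parallel section depends only on $(x_1, x_2)$.

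In dimension two I take $F := K \times C_2$, where $K \subset I_1$ is a compact interval with nonempty interior and $C_2 \subset I_2$ is a standard Cantor set compactly contained in $I_2$. Then $\lambda^2(F) = \lambda(K) \cdot \lambda(C_2) = 0$, giving \cref{i_nullset}. The set $F$ is closed and nowhere dense, and $M \setminus F$ is connected: any two points may be joined via the open strip $\{x_1 < \min K\} \subset M \setminus F$, which meets every horizontal line $\{x_2 = y\}$ with $y \notin C_2$. Negligibility for all connections of class $C^1$ (part \cref{i_negc1}) is then a direct application of \cref{l_hyper}, part \cref{i_hypernowheredense}, with $b_1 := \min K$.

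The crux is \cref{i_nnegc0}. Choose smooth bump functions $b \colon I_1 \to \R$ and $h \colon I_2 \to \R$ with $b > 0$ exactly on the interior of $K$, $h \equiv 1$ on an open neighborhood $U$ of $C_2$, and $h$ compactly supported in $I_2$; let $g \colon I_2 \to [0, 1]$ be the continuous function which is $0$ to the left of $C_2$, $1$ to the right of $C_2$, and coincides with the Cantor--Lebesgue staircase of $C_2$ in between. Setting $f(x_1, x_2) := b(x_1) h(x_2)$ and imitating \cref{p_noextension}, I define
\[
\omega_1 := -g(x_2) \, \frac{D_1 f}{1 + f}, \qquad \omega_2 := -\frac{D_2 f}{1 + f}, \qquad \omega := \omega_1\, dx^1 + \omega_2\, dx^2.
\]
Since $g$ is continuous while the remaining factors are smooth with $1 + f \geq 1$, the form $\omega$ is of class $C^0$, and by \cref{r_frame} it determines a $C^0$ connection $\nabla$ on the trivial line bundle $E$ over $M$.

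My candidate parallel section is
\[
\tilde t(x_1, x_2) := \bigl(1 + b(x_1) h(x_2)\bigr)^{g(x_2)},
\]
which is continuous on $M$. A direct computation confirms $\partial_i \log \tilde t + \omega_i = 0$ wherever $\tilde t$ is differentiable in $x_i$. At each point of $M \setminus F$, either $x_2 \notin C_2$ (so $g$ is locally constant on a gap, making $\tilde t$ smooth) or $x_1 \notin K$ (so $b \equiv 0$ in a neighborhood, forcing $\tilde t \equiv 1$ and $\omega \equiv 0$); hence $t := \tilde t|_{M \setminus F}$ lies in $\Gamma^\nabla(M \setminus F, E)$. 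Any parallel extension of $t$ to $M$ must, by uniqueness of parallel transport in $x_1$ from a line $\{x_1 = a_1\}$ with $a_1 \in I_1$ to the left of $K$, coincide with the fundamental solution $X$ of \cref{t_parameters} applied to the constant datum $t(a_1, \cdot) \equiv 1$; the substitution $u = 1 + b(s) h(x_2)$ evaluates the integral defining $X$ explicitly back to $\tilde t$. The main obstacle is thus to verify that $\tilde t$ fails to be $C^1$ somewhere on $F$: pick any $x_1^* \in I_1$ with $b(x_1^*) > 0$ and any $c \in C_2 \cap U$ at which $g$ is not differentiable---such $c$ exists by the well-known fact that the Cantor--Lebesgue staircase has upper derivative $+\infty$ on a set of full Cantor measure in $C_2$. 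Near $(x_1^*, c)$ one has $\tilde t(x_1^*, x_2) = (1 + b(x_1^*))^{g(x_2)}$, and a partial derivative in $x_2$ at $c$ would, by the chain rule, exist only if $g$ were differentiable at $c$ (since $\log(1 + b(x_1^*)) \neq 0$). Hence $\tilde t \notin C^1(M)$, so $\tilde t \notin \Gamma^\nabla(M, E)$, and $t$ admits no parallel extension.
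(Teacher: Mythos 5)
Your parts \cref{i_negc1} and \cref{i_nullset} are fine and follow essentially the paper's route (the paper takes $F=\{x\in M \mid 0\le x_1,\ x_2\in C\}$ with $C$ the Cantor ternary set and invokes \cref{l_hyper}, \cref{i_hypernowheredense}, just as you do). The gap is in part \cref{i_nnegc0}: your candidate section $\tilde t=(1+f)^{g}$ with $f=b(x_1)h(x_2)$ is \emph{not} $\nabla$-parallel on all of $M\setminus F$ for the connection form you wrote down. Parallelism in the $x_2$-direction demands $\partial_2\log\tilde t=-\omega_2$; off $C_2$, where $g'=0$, you get $\partial_2\log\tilde t=g\,D_2f/(1+f)$, whereas $-\omega_2=D_2f/(1+f)$. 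These disagree wherever $(1-g)\,D_2f=(1-g)\,b(x_1)h'(x_2)\neq 0$, and such points do lie in $M\setminus F$: take $x_1$ in the interior of $K$ and $x_2<\min C_2$ on the left flank of the bump $h$, where $g\equiv 0$ but $h'\neq 0$. So the element $t$ you feed into the restriction map is not in $\Gamma^\nabla(M\setminus F,E)$, and the non-surjectivity argument never gets off the ground. The repair is small: either set $\omega_2:=-g(x_2)\,D_2f/(1+f)$ (still continuous), or, simpler, drop $h$ entirely so that $\omega_2=0$. The paper does the latter in additive form: it uses only $\omega_1\,dx^1$ with $\omega_1=-f'(x_1)g(x_2)/(1+f(x_1)g(x_2))$ for $f(x_1)=\max(x_1,0)^2$, and the section $s=1+f(x_1)g(x_2)$; then $\partial_2 s=f g'$ vanishes on $M\setminus F$ because either $g'=0$ (when $x_2\notin C$) or $f\equiv 0$ near the point (when $x_1<0$), and non-extendability follows since any parallel extension must equal $s$ by density and continuity, while $s$ fails to be partially differentiable in $x_2$ over the non-differentiability points of the Cantor function.

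A second, smaller defect: in your reduction to $n\ge 3$ you take $F$ to be a product with \emph{compact} boxes in the extra coordinates. Then $M\setminus F$ contains points $(x_1,c,x_3,\dots,x_n)$ with $x_1$ in the interior of $K$, $c\in C_2$ a non-differentiability point of $g$, and $x_3$ outside the box; at such points your $\tilde t$, which depends only on $(x_1,x_2)$, is not even partially differentiable in $x_2$, so again $t\notin\Gamma^\nabla(M\setminus F,E)$. You must take the full cylinder $I_3\times\dots\times I_n$ in the remaining coordinates, which is exactly what the paper's choice of $F$ does.
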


\begin{proof}
Purely for convenience (i.e., nicer formulas below) let us assume that $0 \in I_1$ and $[0,1] \subset I_2$. Then we can take
\[
F = \{x \in M \mid 0 \leq x_1, x_2 \in C\},
\]
where $C$ denotes the Cantor ternary set. Then $F$ is obviously closed in $M$ and we have \cref{i_nullset} (since $\lambda^1(C) = 0$). Since $C$ is nowhere dense (and closed) in $[0,1]$, it is so in $I_2$. Thus we infer \cref{i_negc1} from \cref{l_hyper}, \cref{i_hypernowheredense}.

In order to see \cref{i_nnegc0}, define
\[
f \colon I_1 \to \R, \quad f(x) = \begin{cases} 0 & \text{when } x < 0, \\ x^2 & \text{when } 0 \leq x, \end{cases}
\]
and let $g \colon I_2 \to \R$ be the extension of the Cantor function such that $g(x) = 0$ for $x < 0$ and $g(x) = 1$ for $1 < x$. Moreover, let $\nabla$ (on $E$, the trivial bundle of rank $1$ over $M$) be given by $\omega_1 dx^1$ with respect to the frame $e = e_1$, where $e(x) = (x,1)$ and
\[
\omega_1(x_1,\dots,x_n) = - \frac{f'(x_1)g(x_2)}{1 + f(x_1)g(x_2)}.
\]
More explicitly,
\[
\nabla_M(se) = (ds + s\omega_1 dx^1) \otimes e
\]
for all $s \in C^1(M)$. Then $\nabla$ is a connection of class $C^0$ on $E$ (\cref{r_frame}). Define
\[
s \colon M \to \R, \quad s(x_1,\dots,x_n) = 1 + f(x_1)g(x_2).
\]
Then $se|_{M \setminus F} \in C^1(M \setminus F,E)$ with $\nabla_{M \setminus F}(se|_{M \setminus F}) = 0$; note that $g$ is differentiable on $I_2 \setminus C$ with $g' = 0$. However, there exists no $\tilde s \in C^1(M)$ such that ${\tilde s}|_{M \setminus F} = s|_{M \setminus F}$ since such a $\tilde s$ would necessarily agree with $s$ (by continuity and the fact that $M \setminus F$ lies dense in $M$), yet the function $s$ is not partially differentiable with respect to its second variable in points $x \in M$ with $0 < x_1$ and $x_2 \in C$ (of which there exists at least one).
\end{proof}

\begin{prop}
\label{p_fatcantor}
Let $n \in \N$, $2 \leq n$, $M$ an open $n$-dimensional interval, and $\lambda_0 < \lambda^n(M) < \infty$. Then there exists $F$ such that $F$ is compact in $M$, $\lambda_0 < \lambda^n(F)$, and $F$ is negligible in $M$ for all connections of class $C^0$.
\end{prop}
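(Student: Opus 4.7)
The plan is to take $F$ to be the product of two fat Cantor sets (in the $x_1$- and $x_2$-directions) times compact subintervals in the remaining coordinate directions, then to invoke part \cref{i_hyperbi} of \cref{l_hyper}.

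First, since $\lambda^n(M) = \prod_{i=1}^n \lambda^1(I_i) > \lambda_0$, I would pick compact subintervals $[\alpha_i,\beta_i] \subset I_i$ for $i = 1,\dots,n$ with $\prod_{i=1}^n (\beta_i - \alpha_i) > \lambda_0$, and fix $\eta \in (0,1)$ so close to $1$ that $\eta^2 \prod_{i=1}^n (\beta_i - \alpha_i) > \lambda_0$ still holds.

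Second, for $i = 1,2$ I would construct a Smith--Volterra--Cantor-type fat Cantor set $C_i \subset [\alpha_i,\beta_i]$ satisfying $\{\alpha_i,\beta_i\} \subset C_i$, with $C_i$ compact and nowhere dense in $I_i$, and with $\lambda^1(C_i) > \eta(\beta_i - \alpha_i)$. The construction is the standard one: start from $[\alpha_i,\beta_i]$ and iteratively remove open central subintervals whose total length is less than $(1-\eta)(\beta_i - \alpha_i)$, while ensuring the individual removed lengths at stage $k$ tend to $0$ (so the residual set remains nowhere dense). This fat-Cantor construction with prescribed endpoints and prescribed lower measure bound is the only genuinely technical ingredient; everything else is bookkeeping.

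Finally, I would set
\[
F := C_1 \times C_2 \times [\alpha_3,\beta_3] \times \dots \times [\alpha_n,\beta_n]
\]
(read as $C_1 \times C_2$ when $n = 2$). Then $F$ is compact in $M$, and
\[
\lambda^n(F) = \lambda^1(C_1)\,\lambda^1(C_2) \prod_{i=3}^n (\beta_i - \alpha_i) > \eta^2 \prod_{i=1}^n (\beta_i - \alpha_i) > \lambda_0.
\]
Setting $b_1 := \alpha_1$ and $b_2 := \alpha_2$, every $x \in F$ satisfies $x_1 \in C_1 \subset [b_1,\beta_1]$ (so $b_1 \leq x_1$) and $x_2 \in C_2 \subset [b_2,\beta_2]$ (so $b_2 \leq x_2$), whence
\[
F \subset \{x \in M \mid b_1 \leq x_1,\ x_2 \in C_2\} \cap \{x \in M \mid b_2 \leq x_2,\ x_1 \in C_1\}.
\]
Since $C_1$ and $C_2$ are nowhere dense in $I_1$ and $I_2$ respectively, part \cref{i_hyperbi} of \cref{l_hyper} concludes that $F$ is negligible in $M$ for all connections of class $C^0$.
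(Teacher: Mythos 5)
Your proposal is correct and follows essentially the same route as the paper: both produce $F$ from fat Cantor sets inside compact subintervals and conclude via part \cref{i_hyperbi} of \cref{l_hyper} (the paper takes $F = C_1 \times \dots \times C_n$ with a fat Cantor set in every factor, whereas you use Cantor sets only in the first two coordinates, which works just as well since the hypothesis of that lemma only constrains $x_1$ and $x_2$). The only omission is the degenerate case $\lambda^n(M) = 0$ (forcing $\lambda_0 < 0$), where one simply takes $F = \emptyset$, as the paper notes.
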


\begin{proof}
In case $\lambda^n(M) = 0$ take $F = \emptyset$. Assume $0 < \lambda^n(M) = \lambda(I_1) \cdot \ldots \cdot \lambda(I_n)$ now. Evidently, there exists a number $\delta > 0$ such that
\[
\lambda_0 < (\lambda(I_1) - 2\delta) \cdot \ldots \cdot (\lambda(I_n) - 2\delta) \quad \text{and} \quad 0 < \lambda(I_i) - 2\delta, \quad \forall i.
\]
Moreover, for all $i \in \{1,\dots,n\}$, there exists a compact subinterval $I_i' \subset I_i$ such that $\lambda(I_i) - \delta < \lambda(I_i')$; also, there exists a closed, nowhere dense subset $C_i \subset I_i'$ such that $\lambda(I_i') - \delta < \lambda(C_i)$ (take a “fat” Cantor set, for instance, translated and rescaled to fit inside $I_i'$). Put $F := C_1 \times \dots \times C_n$. Then $F$ is obviously compact in $M$ with
\[
(\lambda(I_1) - 2\delta) \cdot \ldots \cdot (\lambda(I_n) - 2\delta) < \lambda(C_1) \cdot \ldots \cdot \lambda(C_n) = \lambda^n(F).
\]
For all $i$, since $I_i$ is open and $I_i' \subset I_i$ compact, there exists $b_i \in I_i$ such that $I_i' \subset \{x_i \in I_i \mid b_i \leq x_i\}$. Therefore, $F$ is negligible in $M$ for all connections of class $C^0$ by means of \cref{l_hyper}, \cref{i_hyperbi}.
\end{proof}

\begin{coro}
\label{c_bigmeasure}
Let $n \in \N$, $2 \leq n$, $M \subset \R^n$ open (endowed with its canonical manifold structure of class $C^k$, $1 \leq k \leq \infty$).
\begin{enumerate}
\item \label{i_finitemeasure} For all numbers $\lambda_0 < \lambda^n(M)$, there exists a compact subset $F$ of $M$ such that $\lambda_0 < \lambda^n(F)$, yet $F$ is negligible in $M$ for all connections of class $C^0$.
\item \label{i_infinitemeasure} When $\lambda^n(M) = \infty$, there exists a closed subset $F$ of $M$ such that $\lambda^n(F) = \infty$, yet $F$ is negligible in $M$ for all connections of class $C^0$.
\end{enumerate}
\end{coro}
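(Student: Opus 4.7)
My plan is to reduce both parts to \cref{p_fatcantor}---which produces fat-Cantor-style negligible sets inside any open $n$-dimensional interval of finite positive measure---by assembling one such set inside each member of a well-chosen family of pairwise disjoint open boxes in $M$ and then gluing local negligibility to global negligibility via \cref{p_local}.

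For \cref{i_finitemeasure}, I would write $M$ as a countable disjoint union of half-open dyadic cubes and retain finitely many $Q_1,\dots,Q_N$ whose measures sum to more than $\lambda_0$. Shrinking each slightly yields open boxes $B_j$ with $\bar B_j \subset \mathrm{int}(Q_j) \subset M$ and $\sum_{j=1}^N \lambda^n(B_j) > \lambda_0$. Applying \cref{p_fatcantor} to each (finite-measure) box $B_j$ produces compact sets $F_j \subset B_j$, each negligible in $B_j$ for every $C^0$ connection, with $\sum_{j=1}^N \lambda^n(F_j) > \lambda_0$. Then $F := F_1 \cup \dots \cup F_N$ is compact in $M$ with $\lambda^n(F) > \lambda_0$. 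To see that $F$ is negligible in $M$ for any vector bundle with $C^0$ connection $(E,\nabla)$, I would invoke \cref{p_local} on the open cover $\mathfrak U := \{M \setminus F, B_1, \dots, B_N\}$ of $M$: pairwise disjointness of the $B_j$'s gives $F \cap B_j = F_j$, which is negligible in $B_j$ by construction, while $F \cap (M \setminus F) = \emptyset$ is trivially negligible by \cref{x_emptyset}; the intersection condition of \cref{p_local} is immediate because $B_i \cap B_j = \emptyset$ for $i \neq j$, and every connected component of $(M \setminus F) \cap B_j = B_j \setminus F_j$ is contained in $M \setminus F$.

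For \cref{i_infinitemeasure} I would run the same scheme with countably many boxes. The key point is to secure a locally finite family $\{B_j\}_{j \geq 1}$ of pairwise disjoint open boxes in $M$ with $\bar B_j \subset M$ and $\sum_j \lambda^n(B_j) = \infty$. A Whitney decomposition of $M$ into closed dyadic cubes---whose diameters are comparable to the distance to $\partial M$ and which is therefore locally finite in $M$---provides this after a mild shrinking of each cube. \Cref{p_fatcantor} then produces compact $F_j \subset B_j$ with $\lambda^n(F_j) \geq \tfrac{1}{2}\lambda^n(B_j)$, each negligible in $B_j$. Local finiteness of $\{F_j\}$, inherited from $\{\bar B_j\}$, makes $F := \bigcup_j F_j$ closed in $M$, and $\lambda^n(F) = \sum_j \lambda^n(F_j) = \infty$. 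The same \cref{p_local} argument as before, applied to the open cover $\{M \setminus F\} \cup \{B_j\}_{j \geq 1}$, delivers negligibility of $F$ in $M$.

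The main obstacle, as I see it, is the measure-theoretic and topological bookkeeping in \cref{i_infinitemeasure}: one needs countably many pairwise disjoint open boxes in $M$ that simultaneously contribute infinite total measure \emph{and} form a locally finite family (so that $\bigcup_j F_j$ is actually closed in $M$). The Whitney decomposition is the standard tool that delivers both properties at once; once that is in place, the rest of the proof is a mechanical application of \cref{p_fatcantor,p_local}.
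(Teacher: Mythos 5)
Your proposal is correct and follows essentially the same route as the paper: decompose $M$ into a locally finite family of dyadic cubes with disjoint interiors, plant a fat-Cantor-type negligible compactum from \cref{p_fatcantor} in each relevant open cube, and glue via \cref{p_local} using the cover consisting of those open cubes together with $M \setminus F$. The only cosmetic difference is that you shrink each cube to an open box with compact closure in its interior, whereas the paper works directly with the interiors $Q^\circ$; this changes nothing of substance.
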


\begin{proof}
We know there exists an at most countable set $\mathfrak Q$ of compact $n$-dimensional cubes of strictly positive measure such that $\bigcup{\mathfrak Q} = M$, the collection $\mathfrak Q$ is locally finite in $M$, and, for all $Q_1, Q_2 \in \mathfrak Q$, $Q_1 \neq Q_2$, the interiors of $Q_1$ and $Q_2$ are disjoint.\footnote{One can take $\mathfrak Q$ to be the set of all $Q \subset M$ such that, for some $k \in \N$, $2^kQ$ is a unit cube with integral vertices and $Q$ is maximal with respect to set inclusion among all of those $Q$.} In particular,
\begin{align*}
\lambda^n(M) = \lambda^n(\bigcup{\mathfrak Q}) \leq \sum_{Q \in \mathfrak Q} \lambda^n(Q) = \sum_{Q \in \mathfrak Q} \lambda^n(Q^\circ) = \lambda^n(\bigcup_{Q \in \mathfrak Q} Q^\circ) \leq \lambda^n(M).
\end{align*}
Let $\lambda_0 < \lambda^n(M)$ be given. Then there exists a finite subset $\mathfrak Q' \subset \mathfrak Q$ such that
\[
\lambda_0 < \sum_{Q \in \mathfrak Q'} \lambda^n(Q^\circ) =: \lambda_1.
\]
Assume $0 \leq \lambda_0$. Then $0 < \lambda_1$ and, for all $Q \in \mathfrak Q'$, we have
\[
\frac{\lambda_0}{\lambda_1} \lambda^n(Q^\circ) < \lambda^n(Q^\circ).
\]
Therefore by \cref{p_fatcantor}, for all $Q \in \mathfrak Q'$, there exists a compact subset $F_Q$ of $Q^\circ$ such that
\[
\frac{\lambda_0}{\lambda_1} \lambda^n(Q^\circ) < \lambda^n(F_Q)
\]
and $F$ is negligible in $Q^\circ$ for all connections of class $C^0$. In turn, there exists a corresponding tuple $(F_Q)_{Q \in \mathfrak Q'}$. Put $F := \bigcup_{Q \in \mathfrak Q'} F_Q$. Then $F$ is compact in $M$ as a finite union of compact subsets of $M$. Moreover,
\[
\lambda_0 = \frac{\lambda_0}{\lambda_1} \sum_{Q \in \mathfrak Q'} \lambda^n(Q^\circ) < \sum_{Q \in \mathfrak Q'} \lambda^n(F_Q) = \lambda^n(\bigcup_{Q \in \mathfrak Q'} F_Q) = \lambda^n(F).
\]
Now let $(E,\nabla)$ be a vector bundle with connection over $M$. Set
\[
\mathfrak U := \{Q^\circ \mid Q \in \mathfrak Q'\} \cup \{M \setminus F\}.
\]
Then $\mathfrak U$ is an open cover of $M$, evidently. Furthermore, \cref{i_localneg,i_intersection} of \cref{p_local} hold (make distinctions as to whether $U$, and possibly $V$, are equal to some $Q^\circ$ or $M \setminus F$). Hence $F$ is negligible in $M$ for $(E,\nabla)$ by \cref{p_local}. As $(E,\nabla)$ was arbitrary, $F$ is negligible in $M$ for all connections of class $C^0$. This proves \cref{i_finitemeasure} in case $0 \leq \lambda_0$; in case $\lambda_0 < 0$ take $F = \emptyset$ (see \cref{x_emptyset}).

Now assume $\lambda^n(M) = \infty$. By \cref{p_fatcantor} there exists a family $(F_Q)_{Q \in \mathfrak Q}$ of compact subsets $F_Q \subset Q^\circ$ satisfying
\[
\frac12 \lambda^n(Q^\circ) < \lambda^n(F_Q)
\]
such that $F_Q$ is negligible in $Q^\circ$ for all connections of class $C^0$.\footnote{Naively one would invoke the axiom of choice in order to conclude here. However, refining the statement of \cref{p_fatcantor}, the axiom of choice can be bypassed. The main point is that in the proof of \cref{p_fatcantor} the choices of $\delta$, $I_i'$, and $C_i$, which lead to $F$, can be made explicit. For instance, $\delta$ may be chosen as the minimum of $\frac13\lambda(I_i)$, $i=1,\dots,n$, and one third of the infimum of $\{t \geq 0 \mid \lambda_0 = (\lambda(I_1) - t) \cdot \ldots \cdot (\lambda(I_n) - t)\}$ in case the latter set is nonempty.} Define $F := \bigcup_{Q \in \mathfrak Q} F_Q$. Then as
\[
\sum_{Q \in \mathfrak Q} \lambda^n(Q^\circ) = \lambda^n(M) = \infty,
\]
we have
\[
\lambda^n(F) = \sum_{Q \in \mathfrak Q} \lambda^n(F_Q) = \infty.
\]
Since $\mathfrak Q$ is locally finite in $M$, the family $(F_Q)$ is locally finite in $M$, too. As the $F_Q$'s are compact whence closed in $M$, their union $F$ is closed in $M$. That $F$ is negligible in $M$ for all connections of class $C^0$ is inferred just like above employing \cref{p_local}. Therefore we have \cref{i_infinitemeasure}.
\end{proof}

Thus far we have established extension results for manifolds $M$ that are open submanifolds of some $\R^n$, $n \in \N$---that is, we have proven local extension results. In principle, we have established extension results only for open $n$-dimensional intervals. To conclude this \namecref{s_hyper}, we prove a global extension result---namely, \cref{t_submfds}. As a preparatory step we notice that negligibility behaves well under diffeomorphisms.

\begin{prop}
\label{p_invariance}
Let $M$ and $M'$ be manifolds of classes $C^k$ and $C^{k'}$ respectively, $1 \leq k,k' \leq \infty$, $\phi \colon M \to M'$ a $C^1$ diffeomorphism, and $F \subset M$ such that $F' := \phi(F)$ is negligible in $M'$ for all connections of class $C^0$. Then $F$ is negligible in $M$ for all connections of class $C^0$.
\end{prop}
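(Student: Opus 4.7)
The plan is to transport the extension problem from $M$ to $M'$ via the diffeomorphism $\phi$, apply the hypothesis on $F'$, and pull the resulting extension back to $M$, using the pullback construction for bundles with connection from \cref{r_pullback}. Given a vector bundle $(E,\nabla)$ with $C^0$ connection over $M$ and a section $s \in \Gamma^\nabla(M \setminus F, E)$, I set $\psi := \phi^{-1} \colon M' \to M$, which is itself a $C^1$ diffeomorphism, and form a pullback $(E',\psi')$ of $E$ by $\psi$ equipped with the pullback connection $\nabla'$. Inspecting the local construction in \cref{r_pullback}, the connection forms of $\nabla'$ with respect to a pulled-back frame are $\psi^*\omega^\alpha_\beta$, which are continuous since the $\omega^\alpha_\beta$ are continuous and $\psi$ is of class $C^1$; hence $\nabla'$ is again of class $C^0$.

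Next, the last assertion of \cref{r_pullback} guarantees that pullback of sections maps $\Gamma^\nabla(E)$ into $\Gamma^{\nabla'}(E')$. Applied to $s$, and combined with $\psi^{-1}(M \setminus F) = \phi(M \setminus F) = M' \setminus F'$, this produces $\psi^* s \in \Gamma^{\nabla'}(M' \setminus F', E')$. By hypothesis $F'$ is negligible in $M'$ for $(E',\nabla')$, so I may extend $\psi^* s$ to some $\tilde t \in \Gamma^{\nabla'}(M', E')$.

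To finish, I pull back once more through $\phi$: choose a pullback $(E'',\phi'')$ of $E'$ by $\phi$ with pullback connection $\nabla''$, obtaining $\phi^*\tilde t \in \Gamma^{\nabla''}(M, E'')$, whose restriction to $M \setminus F$ is $\phi^*(\psi^* s)$. Functoriality of the pullback, together with $\psi \circ \phi = \id_M$, should supply a canonical isomorphism $(E'',\nabla'') \cong (E,\nabla)$ of $C^0$-connection pairs under which $\phi^*(\psi^* s)$ corresponds to $s$; consequently $\phi^*\tilde t$ transports to a $\nabla$-parallel section on $M$ that extends $s$. Since $(E,\nabla)$ and $s$ were arbitrary, this will prove that $F$ is negligible in $M$ for all connections of class $C^0$. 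The only step requiring justification beyond formal sheaf-theoretic manipulation is the functoriality claim at the level of connections: the bundle identification $E'' \cong E$ is routine from the universal property, and the agreement of the iterated pullback connection with $\nabla$ reduces, via the local characterization in \cref{r_pullback}, to the identity $\phi^*\psi^*\omega^\alpha_\beta = (\psi \circ \phi)^*\omega^\alpha_\beta = \omega^\alpha_\beta$ for the local connection forms. That is where I expect the main bookkeeping to lie.
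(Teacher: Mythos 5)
Your proposal is correct and follows essentially the same route as the paper: pull back $(E,\nabla)$ and $s$ along $\psi=\phi^{-1}$, extend over $M'$ using the negligibility of $F'$, and pull the extension back to $M$. The only cosmetic difference is at the final step, where the paper avoids introducing a third bundle $E''$ by observing that $(E,\psi'^{-1})$ is itself a pullback of $E'$ by $\phi$ (so $\nabla$ is literally the pullback connection of $\nabla'$), which disposes of the functoriality bookkeeping you flag via the local identity $\phi^*\psi^*\omega^\alpha_\beta=\omega^\alpha_\beta$.
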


\begin{proof}
Let $(E,\nabla)$ be a vector bundle with connection over $M$. As $\phi$ is a $C^1$ diffeomorphism, it has a $C^1$ inverse $\psi \colon M' \to M$. We know there exists a pullback $(E',\psi')$ of $E$ by $\psi$ (see \cref{r_pullback}).\footnote{As a matter of fact, here, the general existence of pullbacks is not needed. When $\pi \colon E \to M$ is the projection of the vector bundle $E$, then take $E'$ to be given by $E$ (as the total space, with its induced $C^1$ structure), $\phi \circ \pi \colon E \to M'$ (as the projection), and the vector space structures that the fibers of $E$ already have; moreover, take $\phi' = \id_E$.} Also, there exists a pullback connection $\nabla'$ of $\nabla$. Let $\sigma$ be a $\nabla$-parallel section in $E$ defined on $M \setminus F$. Then there exists a pullback section $\sigma'$ of $\sigma$, which is a $\nabla'$-parallel section in $E'$ defined on $\psi^{-1}(M \setminus F) = M' \setminus F'$. As $F'$ is negligible in $M'$ for $(E',\nabla')$, there exists a $\nabla'$-parallel section $\tilde\sigma'$ in $E'$ defined on $M'$ such that $\tilde\sigma'|_{M' \setminus F'} = \sigma'$.

As ($\phi$ and whence) $\psi$ is a $C^1$ diffeomorphism, $\psi'$ is a $C^1$ diffeomorphism and, passing from the $C^l$ manifold structure of $E$ to its induced $C^1$ manifold structure, $(E,\psi'^{-1})$ is a pullback of $E'$ by $\psi^{-1} = \phi$. Hence there exists an associated pullback section $\tilde\sigma$ of $\tilde\sigma'$, which is $\nabla$-parallel since $\nabla$ is the pullback connection of $\nabla'$. Moreover, $\tilde\sigma|_{M \setminus F}$ is the pullback section of $\sigma$ with respect to the identity diagram. Thus, $\tilde\sigma|_{M \setminus F} = \sigma$. As $\sigma$ was arbitrary, this proves that $F$ is negligible in $M$ for $(E,\nabla)$. As $(E,\nabla)$ was arbitrary, this proves in turn that $F$ is negligible in $M$ for all connections of class $C^0$.
\end{proof}

\begin{rema}[Maximal extensions]
\label{r_maxext}
Let $M$ be a manifold, $F$ a closed, nowhere dense subset of $M$, $(E,\nabla)$ a vector bundle with connection over $M$, and $s$ an element of  $\Gamma^\nabla(E)(M \setminus F)$. Let $U_i \subset M$ be open and $s_i \in \Gamma^\nabla(E)(U_i)$ such that $s_i|_{U_i \setminus F} = s|_{U_i \setminus F}$, for $i=0,1$. Then $s_0$ and $s_1$ agree on $(U_0 \setminus F) \cap (U_1 \setminus F) = (U_0 \cap U_1) \setminus F$. Since $F$ nowhere dense in $M$, we know that $(U_0 \cap U_1) \cap F$ is nowhere dense in $U_0 \cap U_1$, whence $(U_0 \cap U_1) \setminus F$ is dense in $U_0 \cap U_1$. Thus we have $s_0 = s_1$ on all of $U_0 \cap U_1$ as $s_0$ and $s_1$ are continuous. Note that the latter conclusion holds even though $E$ might be non-Hausdorff since on an open set over which $E$ is trivial, the $s_i$ correspond to continuous maps to the topological space $\R^r$, which is Hausdorff, $r \in \N$ being the local rank of $E$.

As $\Gamma^\nabla(E)$ is a sheaf, this argument shows that there exists one, and only one, $\tilde s \in \Gamma^\nabla(E)(\tilde U)$ such that $\tilde s|_{U_0} = s_0$ holds for all $U_0$ and $s_0$ as above. Note that $M \setminus F \subset \tilde U$ and $\tilde s|_{M \setminus F} = s$ since we can take $U_0 = M \setminus F$ and $s_0 = s$. We call $\tilde s$ the \emph{maximal $\nabla$-parallel extension} of $s$.

The section $\tilde s$ has the property that when $p \in M \setminus \tilde U$, then there does not exist a section $s_0 \in \Gamma^\nabla(E)(U_0)$, where $U_0 \subset M$ is open, $p \in U_0$, and $s_0 = s$ on $U_0 \setminus F$; otherwise we had $\tilde s|_{U_0} = s_0$ implying $U_0 \subset \tilde U$ and thus $p \in \tilde U$, a contradiction.
\end{rema}

\begin{theo}
\label{t_submfds}
Let $M$ be a manifold, $F$ a closed $C^1$ submanifold with boundary of $M$ such that $1 \leq \codim(F,M)$ and such that, for all connected components $F'$ of $F$ with $\codim(F',M) = 1$, we have $F' \cap \partial F \neq \emptyset$. Then $F$ is negligible in $M$ for all connections of class $C^0$.
\end{theo}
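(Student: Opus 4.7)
The plan is to reduce to $M$ connected via \cref{p_components} and then use the maximal-extension framework of \cref{r_maxext}. Given a $C^0$ connection $\nabla$ on a vector bundle $E$ and a parallel section $\sigma \in \Gamma^\nabla(E)(M \setminus F)$, I form its maximal parallel extension $\tilde\sigma$ on an open set $\tilde U \supset M \setminus F$ and aim to prove $\tilde U = M$; observe that $F$ is nowhere dense in $M$ by \cref{r_nowdense-codim}. For each $p \in F$, I will produce an open neighborhood $N$ of $p$ and a parallel extension of $\sigma|_{N \setminus F}$ to $N$. By density of $N \setminus F$ in $N$ and the uniqueness assertion in \cref{r_maxext}, any such extension must coincide with $\tilde\sigma$ on $N \cap \tilde U$, so maximality forces $N \subset \tilde U$ and hence $p \in \tilde U$. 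The neighborhood $N$ will always be produced by pulling back one of the three parts of \cref{l_hyper} via \cref{p_invariance}.

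If $\codim_p(F, M) \geq 2$, a $C^1$ straightening chart places $F \cap U$ inside $\R^d \times \{0\}^c$ with $c \geq 2$; after shrinking to an open box and permuting coordinates so that two of the normal coordinates become $x_1, x_2$, \cref{l_hyper}(\ref{i_hyperbi}) with $C_1 = C_2 = \{0\}$ applies. If $\codim_p(F, M) = 1$ and $p \in \partial F$, the straightening chart places $F \cap U$ inside the half-hyperplane $H_d \times \{0\}$; permuting so the boundary-defining coordinate becomes $x_1$ and the normal coordinate becomes $x_2$, \cref{l_hyper}(\ref{i_hyperdiscrete}) with $C_2 = \{0\}$ and $b_1$ slightly below $0$ applies.

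The interesting case is $\codim_p(F, M) = 1$ with $p \notin \partial F$, and this is where the global hypothesis on codimension-one components bites. Let $F'$ be the connected component of $F$ containing $p$. Since $M$ is locally connected, $F'$ is clopen in $F$ and hence closed in $M$, and the hypothesis delivers some $q \in F' \cap \partial F = \partial F'$. I pick a $C^1$ path $\gamma \colon [0, 1] \to F'$ from $q$ to $p$, transverse to $\partial F'$ at $q$, and extend it slightly past $p$ into $F' \setminus \partial F'$ and past $q$ into $M \setminus F$. Since $F \setminus F'$ is closed in $M$, the open set $N_0 := M \setminus (F \setminus F')$ is an open neighborhood of $F'$ disjoint from $F \setminus F'$. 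A $C^1$ tubular-neighborhood construction then yields a $C^1$ chart $\Phi \colon N \to (-\delta, 1 + \delta) \times (-\epsilon, \epsilon)^{n-1}$ on an open set $N \subset N_0$ containing the image of $\gamma$, sending $\gamma(t)$ to $(t, 0, \ldots, 0)$ and sending $F' \cap N$ to $\{x_1 \geq 0, x_n = 0\} \cap \Phi(N)$. Then $F \cap N = F' \cap N$ satisfies the hypothesis of \cref{l_hyper}(\ref{i_hyperdiscrete}) with the lemma's indices $1, 2$ replaced by $1, n$ (i.e., $C_n = \{0\}$, $b_1 = -\delta/2$), and \cref{p_invariance} completes the step.

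\emph{Main obstacle.} Cases with $\codim_p(F, M) \geq 2$ or $p \in \partial F$ are routine reductions to \cref{l_hyper}. The technical heart is the tubular-chart construction in the remaining case: one needs a $C^1$ diffeomorphism of a tube around a compact $C^1$ arc in a $C^1$ submanifold-with-boundary, simultaneously straightening the arc to an interval on the $x_1$-axis and the enclosing submanifold to $\{x_1 \geq 0, x_n = 0\}$. This step is precisely what converts the \emph{global} assumption ``every codimension-one component of $F$ meets $\partial F$'' into the \emph{local} half-space hypothesis required by \cref{l_hyper}(\ref{i_hyperdiscrete}).
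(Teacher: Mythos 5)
Your reduction to \cref{l_hyper} via \cref{r_maxext} and \cref{p_invariance} is exactly the paper's skeleton, and your treatment of the two routine cases ($\codim_p(F,M)\ge 2$, and $\codim_p(F,M)=1$ with $p\in\partial F$) is correct. But in the decisive case---$\codim_p(F,M)=1$ with $p$ an interior point of $F$---you take a genuinely different route from the paper, and that route has a gap you yourself flag but do not close: the existence of the $C^1$ chart $\Phi\colon N\to(-\delta,1+\delta)\times(-\epsilon,\epsilon)^{n-1}$ straightening simultaneously a compact arc $\gamma$ and the codimension-one submanifold-with-boundary $F'$ around it, with $\Phi(F\cap N)\subset\{x_1\ge 0,\ x_n=0\}$. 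This is not a routine citation. One must (a) arrange that $\gamma$ is embedded and meets $\partial F'$ only at $q$, transversally; (b) build a tubular neighborhood of the arc inside $F'$ \emph{and} a compatible tubular neighborhood of $F'$ in $M$ near the arc, in the $C^1$ category, where the normal bundle of a $C^1$ submanifold is only $C^0$ and the standard exponential-map proofs degrade; (c) handle the transition at $q$, where the local model changes from ``arc in $M\setminus F$'' to ``arc in a half-hyperplane''; and (d) shrink the tube so that it meets $F'$ only in the sheets swept out by the arc and not in other folds of $F'$ accumulating nearby---a separation argument that uses point-set hypotheses the theorem does not grant you ($M$ is not assumed Hausdorff, second-countable, or metrizable in \cref{t_submfds}). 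None of these steps is supplied, and together they constitute the actual content of the case.

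The paper's proof is engineered precisely to avoid this construction. It never builds a global tube along a path: instead it argues by contradiction inside a \emph{single} straightening chart $U\cong V\subset\R^n$. Assuming $p\notin\tilde U$, it shows (by a nearest-point argument: the closest point of $\phi(U\setminus\tilde U)$ to a hypothetical $q\in F\cap U\cap\tilde U$ would sit in a half-hyperplane configuration to which \cref{l_hyper}, \cref{i_hyperdiscrete} applies) that the entire set $F\cap U$ misses $\tilde U$; an open-and-closed argument then propagates $Z\cap\tilde U=\emptyset$ over the whole connected component $Z$ of the interior of $F$ containing $p$, and the hypothesis $F'\cap\partial F\neq\emptyset$ produces a boundary point in $\bar Z$ that \emph{does} lie in $\tilde U$---a contradiction. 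This uses only one-chart geometry plus elementary connectedness of $Z$, so it survives with no regularity assumptions on $M$ and no tubular neighborhood theorem. To repair your proposal you would either have to prove the tubular-chart lemma in the stated generality (substantial, and probably requiring added hypotheses on $M$), or replace the path-dragging step by a propagation argument of the paper's kind.
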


\begin{proof}[Idea of the proof]
Say we are given a $\nabla$-parallel section $s \colon M \setminus F \to E$ in a vector bundle $E$. We want to extend $s$ through the points of $F$. Points $p \in F$ where $\codim_p(F,M) \ge 2$ are not a problem (use \cref{i_hyperdiscrete} or \cref{i_hyperbi} of \cref{l_hyper}). Points with $\codim_p(F,M) = 1$ that are boundary points of $F$ (\cref{d_intpoint}) are no problem either (use \cref{i_hyperdiscrete} of \cref{l_hyper}). The problem lies in extending $s$ through codimension $1$ points $p$ of $F$ which are interior points of $F$. The local picture around such points $p$ looks like $M = \R^n$ dissected by $F = \{x \in \R^n \mid x_n = 0\}$. Thus a purely local consideration is out of the question, for locally around $p$ the space $M \setminus F$ is disconnected (and the two “branches” of $s$ might not fit together).

Now instead of trying to prove directly that $s$ extends to $p$, we more or less go in the opposite direction. Assume that $s$ does not extend through $p$. Then I prove that the same is true for all points $q$ in a neighborhood of $p$. Globalizing this, we arrive at an entire connected component $Z$ of interior points of $F$ such that there is no point in $Z$ around which $s$ extends locally. The component $Z$ belongs to a connected component $F'$ of $F$. By our assumptions on $F$, the component $F'$ must contain at least one boundary point. Around this boundary point, however, $s$ extends locally. Thus we find a point in $Z$ around which $s$ extends locally---contradiction.
\end{proof}

\begin{proof}
Let $(E,\nabla)$ be a vector bundle with connection over $M$, $s \in \Gamma^\nabla(E)(M \setminus F)$. By \cref{r_maxext} there is the maximal $\nabla$-parallel extension $\tilde s \colon \tilde U \to E$ of $s$. We want to show that $\tilde U = M$, or, more specifically, that $M \subset \tilde U$.

Assume that $p \in M$. When $p \notin F$, then $p \in \tilde U$ since $M \setminus F \subset \tilde U$. Suppose that $p \in F$ from now on. As $F$ is a $C^1$ submanifold with boundary of $M$ at $p$, there exist $d,c \in \N$, an open neighborhood $U$ of $p$ in $M$, an open subset $V$ of $\R^d \times \R^c$, and a $C^1$ diffeomorphism $\phi \colon U \to V$, where the sets $U$ and $V$ equipped with their induced manifold structures from $M$ and $\R^d \times \R^c$, respectively, such that
\[
\phi(F \cap U) = (H_d \times \{(0,\dots,0)\}) \cap V.
\]
See \cref{e_halfspace} for the definition of $H_d$. By further restricting $\phi$, we can achieve that $V$ is an open hyperrectangle in $\R^d \times \R^c \cong \R^{d + c}$---that is, $V = I_1 \times \dots \times I_n$ with $n = d + c$ for open intervals $I_i \subset \R$, $i=1,\dots,n$---such that $V$ contains $\phi(p)$. Write $\phi(p) = (\phi_1(p),\dots,\phi_n(p))$.
Observe that
\[
1 \leq \codim(F,M) \leq \codim_p(F,M) = c.
\]
So, we have $2 \leq c$ or $c = 1$.

Assume $2 \leq c$. Then $0 = \phi_{n-1}(p) \in I_{n-1}$, so that there exists $b_{n-1} \in I_{n-1}$ with $b_{n-1} < 0$. Put $C_n = \{0\} \cap I_n$ (actually, $C_n = \{0\}$ since $0 = \phi_n(p) \in I_n$). Then $C_n$ is nowhere dense in $I_n$ and
\[
\phi(F \cap U) \subset \{x \in V \mid x_{n-1} = x_n = 0\} \subset \{x \in V \mid b_{n-1} \leq x_{n-1}, x_n \in C_n\}.
\]
Thus $\phi(F \cap U)$ is negligible in $V$ for all connections of class $C^0$ by \cref{l_hyper}, \cref{i_hyperdiscrete}. Therefore by \cref{p_invariance}, $F \cap U$ is negligible in $U$ for all connections of class $C^0$; in particular, $F \cap U$ is negligible in $U$ for $(E,\nabla)|_U$. Hence there exists an element $s_0 \in \Gamma^\nabla(E)(U)$ which agrees with $s$ on $U \setminus F$. So $U \subset \tilde U$ by the definition of $\tilde s$. As a result, we have $p \in \tilde U$.

Now, assume $c = 1$. If $d$ was equal to $0 \in \N$, the point $p$ would be an isolated point of $F$ and, in consequence, $F' = \{p\}$ would be a connected component of $F$ with $\partial F' = \emptyset$, contradictory to our premises. Thus $d \neq 0$; that is, $1 \leq d$.
Assume that $p \in \partial F$. Then $0 = \phi_d(p) \in I_d$, whence $b_d < 0$ for some $b_d \in I_d$. Just like above (for $2\le c$) we derive $p \in \tilde U$ invoking \cref{i_hyperdiscrete} of \cref{l_hyper} and \cref{p_invariance}.

Assume that $p$ is an interior point of $F$. I claim that $F \cap U \subset M \setminus \tilde U$. For that matter, suppose $q \in (F \cap U) \cap \tilde U$. Since there exists a $C^1$ (a $C^\infty$, in fact) diffeomorphism $V \to \R^n$ taking the set $\{x \in \R^n \mid 0 \leq x_d, x_n = 0\}$ to itself, we can assume that $V = \R^n$. Now, since $\phi(U \setminus \tilde U)$ is closed in $V = \R^n$ and contains at least one element (namely $\phi(p)$), there exists an element $x^0 \in \phi(U \setminus \tilde U)$ such that
\[
\epsilon := \abs{x^0 - \phi(q)} = \inf\{\abs{x - \phi(q)} \mid x \in \phi(U \setminus \tilde U)\}.
\]
In consequence, there exists an element $p^0 \in U \setminus \tilde U$ such that $\phi(p^0) = x^0$. Observe that $x^0 \neq \phi(q)$ as otherwise the injectivity of $\phi$ would imply $p^0 = q$ and thus $p^0 \in \tilde U$. So, $0 < \epsilon$. Moreover, observe that $x^0_n = 0$ since $p^0 \in F$. Therefore, there exists an $n$-dimensional Euclidean move (a translation followed by an orthogonal transformation) $\tau \colon \R^n \to \R^n$ taking $\phi(q)$ to the origin, $x^0$ to $(0,\dots,0,\epsilon,0)$, and the set $\{x \in \R^n \mid x_n=0\}$ to itself. Define $J_n := \R$, $J_{n-1} := (0,\infty)$, and, in case $2 < n$, $J_i := (-\delta,\delta)$ with $\delta = \sqrt{\frac1{2(n-2)}} \epsilon$ for $i=1,\dots,n-2$. Moreover, let
\[
V_0 := J_1 \times \dots \times J_n, \quad U_0 := (\tau \circ \phi)^{-1}(V_0).
\]
Then
\[
(\tau \circ \phi)(U_0 \setminus \tilde U) \subset \left\{ y \in V_0 \mid \sqrt{\frac12}\epsilon \leq y_{n-1},y_n = 0 \right\}.
\]
As a matter of fact, when $y \in V_0$ such that $y = \tau(x)$ for an $x \in \phi(U \setminus \tilde U)$, then we have $y_n = 0$ (note that $x_n = 0$ as $U \setminus \tilde U \subset F$) and
\[
\epsilon^2 \leq \abs{x - \phi(q)}^2 = {\abs y}^2 = y_1^2 + \dots + y_{n-1}^2 \leq (n-2)\delta^2 + y_{n-1}^2 = \frac12 \epsilon^2 + y_{n-1}^2.
\]
Accordingly, by \cref{i_hyperdiscrete} of \cref{l_hyper}, $(\tau \circ \phi)(U_0 \setminus \tilde U)$ is negligible in $V_0$ for all connections of class $C^0$. In consequence, by \cref{p_invariance}, $U_0 \setminus \tilde U$ is negligible in $U_0$ for all connections of class $C^0$; note that $(\tau \circ \phi)|_{U_0} \colon U_0 \to V_0$ is a $C^1$ diffeomorphism. Specifically, $U_0 \setminus \tilde U$ is negligible in $U_0$ for $(E,\nabla)|_{U_0}$, whence there exists an element $s_0 \in \Gamma^\nabla(E)(U_0)$ such that $s_0 = \tilde s$ on $U_0 \cap \tilde U$; that is, $s_0 = s$ on $U_0 \setminus F$. The maximality of $\tilde s$ implies $U_0 \subset \tilde U$. As $p^0 \in U_0$, we deduce $p^0 \in \tilde U$---a contradiction. Therefore, for all $q \in F \cap U$, we have $q \notin \tilde U$.

Let $Z$ be the connected component of the interior of $F$ that contains $p$. Then, on the one hand, the arguments of the preceding paragraph, applied to an arbitrary $p' \in Z$ instead of $p$, show that $Z \setminus \tilde U$ is open in $Z$. On the other hand, $Z \setminus \tilde U$ is certainly closed in $Z$ since $\tilde U$ is open in $M$, thus $Z \cap \tilde U$ open in $Z$. As $p \in Z \setminus \tilde U$, we infer $Z = Z \setminus \tilde U$ from the connectedness of $Z$. Let $\bar Z$ be the closure of $Z$ in $F$.
If $Z = \bar Z$, then $Z$ would be a connected component of $F$ with $\codim(Z,M) = 1$ and $\partial Z = \emptyset$, which is impossible under the assumptions of the \namecref{t_submfds}. Thus there exists an element $p'' \in \bar Z$ such that $p'' \notin Z$. Suppose that $p''$ is an interior point of $F$. Then there exists a connected, open neighborhood $U''$ of $p''$ in $F$ such that $U''$ contains only interior points of $F$. As $p''$ lies in the closure of $Z$, the intersection $Z \cap U''$ contains an element. In turn, $Z \cup U''$ is connected. Thus $Z \cup U'' \subset Z$ by the maximality of $Z$. In particular, we conclude $p'' \in Z$---a contradiction. Therefore, $p''$ is not an interior point of $F$, but a boundary point of $F$. Moreover, $\codim_{p''}(F,M) = 1$ as the codimension of $F$ inside $M$ is constant---that is, constantly equal to $1$---on $Z$. Thus we find that $p'' \in \tilde U$ (just as we did for $p$ in place of $p''$ above).
However, we also have $\bar Z \subset F \setminus \tilde U$ because $F \setminus \tilde U$ is closed in $F$ and a superset of $Z$. This is a contradiction.

In conclusion, we see that $p \in F$, or more generally $p \in M$, implies $p \in \tilde U$. Thus $M = \tilde U$ and $\tilde s \in \Gamma^\nabla(E)(M)$ so that $\tilde s|_{M \setminus F} = s$. As $s$ was arbitrary, this tells us that $F$ is negligible in $M$ for $(E,\nabla)$. As $(E,\nabla)$ was arbitrary, we deduce that $F$ is negligible in $M$ for all connections of class $C^0$, which was to be demonstrated.
\end{proof}

\begin{defi}
\label{d_dissected}
We say that a connected manifold $M$ is \emph{dissected by $C^1$ hypersurfaces} when, for all closed, connected $C^1$ submanifolds (meaning without boundary) $F$ of $M$ with $\codim(F,M) = 1$, the space $M \setminus F$ is disconnected (i.e., equal to the disjoint union of two nonempty, open subsets).
\end{defi}

\begin{coro}
\label{c_submfds}
Let $M$ be a connected, Hausdorff manifold which is dissected by $C^1$ hypersurfaces, $2 \leq \dim M$, and $F \neq M$ a closed $C^1$ submanifold with boundary of $M$. Then the following are equivalent:
\begin{enumerate}
\item $F$ is negligible in $M$ for all connections of class $C^0$. \label{i_neg-all}
\item $F$ is negligible in $M$ for all connections of class $C^{k-1}$. \label{i_neg-smooth}
\item $F$ is negligible in $M$ for all connections of class $C^{k-1}$ on the trivial bundle of rank $1$ over $M$. \label{i_neg-special}
\item $F$ is nowhere dense in $M$ and $M \setminus F$ is connected. \label{i_nowdense-conn}
\item $1 \leq \codim(F,M)$ and there exists no connected component $Z$ of $F$ such that $\codim(Z,M) = 1$ and $\partial Z = \emptyset$. \label{i_explicit}
\end{enumerate}
\end{coro}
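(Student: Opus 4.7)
My plan is to prove the five conditions are equivalent by establishing the cycle
$\text{\cref{i_neg-all}} \Rightarrow \text{\cref{i_neg-smooth}} \Rightarrow \text{\cref{i_neg-special}} \Rightarrow \text{\cref{i_nowdense-conn}} \Rightarrow \text{\cref{i_explicit}} \Rightarrow \text{\cref{i_neg-all}}$.
The first two implications are immediate: restricting the class of connections for which negligibility is required to hold trivially weakens the statement (and a connection on a specific bundle is a special case of a connection on an arbitrary bundle).

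For \cref{i_neg-special} $\Rightarrow$ \cref{i_nowdense-conn}, I would apply the two necessity results from \cref{s_basics}. \Cref{c_nowheredense} yields that $F$ is nowhere dense in $M$ (its hypotheses are exactly those of \cref{i_neg-special}, together with $M$ connected, Hausdorff, $\dim M \ge 2$, and $F \ne M$, all of which are in force). For connectedness of $M \setminus F$, apply \cref{p_connected} with $r = 1$ and the standard connection on the trivial bundle, which is $C^\infty$ and hence $C^{k-1}$.

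For \cref{i_nowdense-conn} $\Rightarrow$ \cref{i_explicit} I would argue as follows. By \cref{r_nowdense-codim}, the nowhere density of $F$ gives $1 \le \codim(F,M)$. Suppose, for contradiction, that a connected component $Z$ of $F$ satisfies $\codim(Z,M) = 1$ and $\partial Z = \emptyset$. Because $F$ is a $C^1$ submanifold with boundary (hence locally path-connected) and $F$ is closed in $M$, the component $Z$ is closed in $M$; the condition $\partial Z = \emptyset$ means $Z$ is a closed connected $C^1$ submanifold (without boundary) of codimension $1$. The dissection hypothesis then writes $M \setminus Z = A \sqcup B$ as a disjoint union of two nonempty open sets. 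Since $M \setminus F$ is dense in $M$ (by nowhere density) and is contained in $M \setminus Z$, it meets both $A$ and $B$; but then $M \setminus F = (A \cap (M \setminus F)) \sqcup (B \cap (M \setminus F))$ would be disconnected, contradicting \cref{i_nowdense-conn}. This is the step I expect to be the main obstacle, as it is the only place where the dissection hypothesis enters and one has to be careful that $Z$ really qualifies for \cref{d_dissected}.

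Finally, \cref{i_explicit} $\Rightarrow$ \cref{i_neg-all} is essentially \cref{t_submfds}. The only point to verify is that the boundary condition in \cref{i_explicit} matches the one in \cref{t_submfds}. For any connected component $F'$ of $F$, the set $F'$ is open in $F$, so a point of $F'$ is a boundary point of $F'$ (in the manifold sense) if and only if it is a boundary point of $F$; that is, $\partial F' = F' \cap \partial F$. Hence the statement ``no component $Z$ with $\codim(Z,M) = 1$ has $\partial Z = \emptyset$'' is equivalent to ``every component $F'$ with $\codim(F',M) = 1$ satisfies $F' \cap \partial F \ne \emptyset$'', and \cref{t_submfds} applies directly to conclude \cref{i_neg-all}.
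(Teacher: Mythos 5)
Your proposal is correct and follows essentially the same route as the paper: the trivial implications, then \cref{c_nowheredense} and \cref{p_connected} for \cref{i_neg-special}$\Rightarrow$\cref{i_nowdense-conn}, the dissection hypothesis plus density of $M\setminus F$ for \cref{i_nowdense-conn}$\Rightarrow$\cref{i_explicit}, and \cref{t_submfds} to close the cycle. The only (harmless) cosmetic difference is that in the dissection step you derive the contradiction by disconnecting $M\setminus F$, whereas the paper shows $M\setminus Z$ would be connected as the closure of the connected dense subset $M\setminus F$; your explicit check that $\partial F' = F'\cap\partial F$ for components $F'$ is a welcome detail the paper leaves implicit.
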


\begin{proof}
Clearly \cref{i_neg-all} implies \cref{i_neg-smooth}, and \cref{i_neg-smooth} implies \cref{i_neg-special}. \Cref{i_neg-special} implies \cref{i_nowdense-conn} according to \cref{c_nowheredense} (here we use that $M$ is Hausdorff, connected, and of dimension $2$ or greater---observe that for $M = \R$, equipped with its canonical $C^k$ manifold structure, the conclusion fails as shown by \cref{x_interval}) and \cref{p_connected} (observe that the standard connection on the trivial bundle of rank $1$ over $M$ is of class $C^{k-1}$).

Assume \cref{i_nowdense-conn}. Then the codimension of $F$ in $M$ is $\geq 1$ by \cref{r_nowdense-codim} (as a $C^1$ submanifold, $F$ is also a $C^0$ submanifold). Let $Z$ be a connected component of $F$ such that $\codim(Z,M) = 1$ and $\partial Z = \emptyset$. Then, in particular, $Z$ is a closed, connected $C^1$ submanifold of $M$. As $M$ is dissected by $C^1$ hypersurfaces, we infer, on the one hand, that $M \setminus Z$ is disconnected. On the other hand, since $F$ is nowhere dense in $M$, we know that $F \cap (M \setminus Z) = F \setminus Z$ is nowhere dense in $M \setminus Z$. In turn, $M \setminus F = (M \setminus Z) \setminus (F \setminus Z)$ is dense in $M \setminus Z$, whence $M \setminus Z$ is connected as the closure of a connected subspace---contradiction. Therefore, a $Z$ as above cannot exist. Thus we have \cref{i_explicit}.

Finally, from \cref{i_explicit} we obtain \cref{i_neg-all} by means of \cref{t_submfds}.
\end{proof}

\begin{rema}[Manifolds dissected by hypersurfaces]
\label{r_dissected}
Let $M$ be a connected, second-countable, Hausdorff manifold with $H_1(M;\Z/2\Z) \cong 0$ ($H$ denoting singular homology here). I contend that $M$ is dissected by $C^1$ hypersurfaces. In particular, \cref{c_submfds} applies to all such $M$ (assuming $2 \leq \dim M$ in addition, of course). The blatant examples are: $M = \R^n$ or $M = S^n$ for $n \in \N$, $n \geq 2$. Note that the condition $H_1(M;\Z/2\Z) \cong 0$ can be strengthened to $H_1(M;\Z) \cong 0$. One might also require $M$ to be simply connected.

The proof of my assertion consists in a twofold application of the following version of the Poincaré-Lefschetz duality theorem \cite[\nopp VIII, 7.12]{MR1335915}: When $X$ is a second-countable\footnote{I include the hypothesis of second-countability mainly because the definition of Čech cohomology ${\check H}(A,B)$ in \cite[VIII, \S 6]{MR1335915} requires $A$ and $B$ to be locally compact subspaces of some Euclidean neighborhood retract (ENR) $E$. In case $X$ is second-countable, we can take $E = X$ (cf. the addendum to \cite[\nopp VIII, 1.3]{MR1335915}). Nevertheless, \cite[\nopp VIII, 7.10]{MR1335915} adverts to the fact that the second-countability is in fact superfluous.}, Hausdorff topological $m$-manifold, $m \in \N$, $A \subset X$ a closed subspace, then there exists a sequence $(\gamma_i)_{i \in \Z}$ of isomorphisms (of abelian groups or else $R$-modules)
\[
\gamma_i \colon {\check H}^i_c(A;R) \to H_{m-i}(X,X \setminus A;R),
\]
where $R := \Z/2\Z$ (as a group or ring) and ${\check H}_c$ signifies Čech cohomology with compact supports.

As a matter of fact, take $F$ to be a closed, connected $C^1$ submanifold of $M$ such that $\codim(F,M) = 1$. Set $n := \dim M$. Then $1 \leq n$ and $F$ itself is a (second-countable, Hausdorff) topological $(n-1)$-manifold. Thus employing the duality theorem (for $X = F$, $m = n-1$, $A = F$, $i = n-1$)---observe that this is plain Poincaré duality now---, we obtain
\[
{\check H}^{n-1}_c(F) \cong H_0(F) \cong R = \Z/2\Z,
\]
where I suppress the coefficient group (or coefficient ring) $R$ in my notation of (co-)homology.
The last isomorphism is due to the fact that $F$ is nonempty and pathwise connected.
Employing the duality theorem for $X = M$, $m = n$, $A = F$, $i = n-1$ yields
\[
{\check H}^{n-1}_c(F) \cong H_1(M,M \setminus F).
\]
Yet as $\tilde H_1(M) \cong H_1(M) \cong 0$ (by assumption) and $\tilde H_0(M) \cong 0$ (since $M$ is pathwise connected), the long exact sequence in reduced homology associated to the pair $(M,M \setminus F)$---note that $M \setminus F \neq \emptyset$---implies
\[
H_1(M,M \setminus F) \cong {\tilde H}_0(M \setminus F).
\]
Therefore, $M \setminus F$ has precisely two (path-)connected components, whence is disconnected, which was to be demonstrated.
\end{rema}

\printbibliography
\end{document}